\documentclass[preprint,10pt,sort,compress]{elsarticle}  

\usepackage[margin=2.5cm]{geometry}
\usepackage{setspace}
\usepackage{lmodern}
\usepackage{amsmath,amssymb}
\usepackage{amsfonts,amsthm}
\usepackage{lineno}
\usepackage{graphicx}
\usepackage{bm,bbm}
\usepackage{algorithm}
\usepackage{algorithmic}

\usepackage{physics}
\usepackage{color}
\usepackage{pxfonts}
\usepackage[footnotesize,bf]{caption}
\usepackage{subcaption}
\usepackage{mathtools}
\usepackage{hhline}
\usepackage[T1]{fontenc}
\usepackage{graphicx}
\usepackage{float}
\usepackage{placeins}

\newtheorem{theorem}{Theorem}[section]

\newtheorem{remark}{Remark}[section]
\newtheorem{definition}{Definition}[section]

\newcommand{\mc}{\mathcal}

\makeatletter
\def\ps@pprintTitle{%
	\let\@oddhead\@empty
	\let\@evenhead\@empty
	\let\@oddfoot\@empty
	\let\@evenfoot\@oddfoot
}
\makeatother

\graphicspath{{figures/}}
\usepackage{multirow}
\usepackage{ulem}
\usepackage{tikz}
\usepackage{color}
\usepackage[]{xcolor}

\begin{document}
\begin{frontmatter}
\title{
PI-DOSnet: A Physics-Informed Deep Operator-Splitting Network for Evolution Partial Differential Equations}
\author[a,b]{Jizu Huang}
 \author[a,b]{Yue Qian}
  \author[a]{Tao Zhou}
\date{}
\address[a]{SKLMS, Academy of Mathematics and Systems Science, Chinese Academy
of Sciences, Beijing, 100190, PR China.}
\address[b]{School of Mathematical Sciences, University of Chinese Academy of Sciences, Beijing 100190, PR China.}

\begin{abstract}
Evolution partial differential equations (PDEs) describe time-dependent physical systems governed by differential laws and arise widely across science and engineering. In recent years, operator learning has emerged as a powerful and efficient paradigm for solving evolution PDEs by learning mappings between infinite-dimensional function spaces, enabling solution prediction without explicit time-step integration. In this work, we propose PI-DOSnet, a physics-informed operator learning framework built upon DOSnet and operator splitting. Unlike purely data-driven operator learning methods, PI-DOSnet incorporates physical constraints during training, allowing it to operate even in the absence of paired input-output data. Once trained, PI-DOSnet performs long-time inference of PDE solutions through an iterative strategy. We analyze the linear stability and approximation error of PI-DOSnet and demonstrate its accuracy, efficiency, and robustness through multiple numerical experiments. Moreover, for the Allen–Cahn equation, PI-DOSnet achieves energy stable solutions even with a large time-step size.
\end{abstract}

\begin{keyword}
Operator learning\sep
neural network\sep evolution equations\sep physics informed learning
\end{keyword}
\end{frontmatter}

\section{Introduction}

Partial differential equations (PDEs) form the mathematical foundation for describing natural phenomena such as convection, diffusion, and reaction. Due to their profound impact and wide-ranging applications, solving PDEs using numerical methods, including the finite element method (FEM), finite volume method (FVM), and finite difference method (FDM), has long been a subject of great interest and intensive research. In recent years, deep neural networks (DNNs) have offered new perspectives on numerically solving PDEs \cite{raissi2019pinn, yu2018dr, liao2019dn, zang2020wan, han2017dlhdp, han2018hd, wang2020mf, chiu2022canpinn, wang2024multistagenn}, where neural networks are used to approximate PDE solutions.
Leveraging their formidable expressive power, DNN-based frameworks exhibit several advantages over traditional FEM and FVM. For instance, their hierarchical feature representations efficiently capture high-dimensional solution structures using a number of parameters that grows only polynomially—rather than exponentially—with dimension, enabling DNNs to effectively handle high-dimensional problems and thereby overcome the “curse of dimensionality.” Moreover, DNNs are mesh-free, often  computationally efficient for high-dimensional problems, and straightforward to implement \cite{han2017dlhdp, han2018hd, beck2019hd_fullynonlinear}. However, when solving nonlinear evolution PDEs, many DNN-based approaches, such as Physics-Informed Neural Networks (PINNs) \cite{raissi2019pinn}, may still suffer from slow convergence or error accumulation during long-time integration \cite{krishnapriyan2021failure_modes, mattey2022bcpinn, wight2020adaptivepinn, wang2024causality, du2021ednn, gu2022dabg}.

As a rapidly evolving frontier, operator learning focuses on learning mappings between infinite dimensional function spaces. For example, the underlying solution operator maps a PDE’s right-hand side or its initial/boundary conditions to the corresponding solution space. Once trained, such models can infer solutions for new input functions with high efficiency, often orders of magnitude faster than traditional solvers, making them particularly suitable for applications requiring repeated evaluations, such as inverse problems or uncertainty quantification \cite{kovachki2023no}. For evolution PDEs, operator learning methods directly map initial conditions to final-time solutions, which can potentially mitigate error accumulation during long-time integration \cite{lan2023dosnet, wang2021longtimepidpo, zhang2025pignn}.

Most operator learning approaches are data-driven and require many pairs of input–output observations for training \cite{kovachki2023no, lu2021deeponet, lan2023dosnet, li2020fno, li2020gno, li2022transo,  tripura2023wno, chen2024pit, hao2023gnot, ye2024pdeformer}. One widely used framework, DeepONet \cite{lu2021deeponet}, implements this idea through a dual-network architecture: a branch net encodes the input function sampled at a fixed set of sensor locations, while a trunk net encodes the query coordinates of the output solution. The final solution is reconstructed by taking the dot product of the branch and trunk outputs. This structure imposes strong inductive biases, significantly reduces generalization error, and flexibly handles arbitrary output queries. 
Alternatively, the Fourier Neural Operator (FNO) \cite{li2020fno} models the solution operator through a parameterized integral kernel, efficiently represented in Fourier space. A key advantage of FNO is its discretization invariance: because the model learns parameters in the frequency domain rather than on the physical grid, it can share weights across different resolutions, enabling powerful zero-shot super-resolution—allowing a model trained on low-resolution data to be directly evaluated on a high-resolution grid. 
Deep Operator-Splitting Network (DOSnet) represents another data-driven operator learning approach whose networks architecture is designed as an Autonomous Flow (Autoflow). Following the principles of traditional operator splitting \cite{g2002splitting, blanes2024splitting}, the autoflow consists of several operator splitting blocks, each composed of alternating linear and nonlinear layers. By separating the treatment of linear and nonlinear operators, DOSnet achieves high computational efficiency, requiring relatively few learning parameters while reducing splitting errors and supporting large time steps. Furthermore, its "non-black-box" architecture offers valuable interpretability, since the network's intermediate outputs correspond to physically meaningful states of the system’s evolution.

While the operator learning approaches discussed above have shown early promise across a wide range of applications, their use for solving evolution PDEs faces two fundamental challenges. First, they typically require a large set of paired input-output observations. For evolution PDEs, generating such data involves repeatedly solving time-dependent problems, making the construction of sufficiently large training datasets prohibitively expensive. This motivates the development of data-free models that require no observed data at all and rely solely on knowledge of the governing PDE. Secondly, the methods outlined above generally yield approximate operators whose predictions are trained to match observed data but are not explicitly enforced to satisfy the underlying PDE. As a result, these approaches are often unsuitable for accurately inferring the solution of a given PDE at arbitrary times. Recent efforts seek to mitigate these limitations by designing specialized architectures and physics-informed loss functions for operator learning, including Physics-Informed DeepONet (PI-DeepONet) \cite{wang2021pideeponet, wang2021longtimepidpo}, Physics-Informed FNO (PI-FNO) \cite{li2024pino}, PhyGeoNet \cite{gao2021phygeonet}, PI-WNO \cite{tripura2024piwno}, PI-DCON \cite{zhong2024pidcon} and PIGNN \cite{zhang2025pignn}. Nevertheless, developing an operator learning framework that can fully overcome these shortcomings remains an important and open challenge.

In this paper, inspired by the DOSnet framework \cite{lan2023dosnet} and operator splitting techniques \cite{g2002splitting, blanes2024splitting}, we develop a physics-informed operator learning model to solve nonlinear evolution PDEs, referred to as  Physics-Informed DOSnet (PI-DOSnet), which addresses the limitations discussed above. Similar to DOSnet, PI-DOSnet is constructed from operator splitting blocks with each block composed of alternating linear and nonlinear layers.  In contrast to DOSnet, however, PI-DOSnet takes both the initial conditions and the time variable as inputs, enabling the prediction of PDE solutions at arbitrary  times. By applying a second order Taylor expansion to the linear operator $e^{\tau \cal L}$, the linear layer in PI-DOSnet admits an explicit time-dependent representation. This allows the time derivative to be computed directly through automatic differentiation \cite{paszke2017ad}, while spatial derivatives are approximated using finite difference schemes. These derivatives are then incorporated into a physics-informed loss function, eliminating the need for paired training data. After training, PI-DOSnet performs long-time inference of the PDE solution using an iterative strategy. We further provide a linear stability analysis and an error analysis for the low-order Taylor expansion employed in PI-DOSnet. Numerical experiments demonstrate that the proposed PI-DOSnet is stable, efficient, and robust for solving evolution PDEs.

The remainder of this paper is organized as follows. Section \ref{section 2} reviews the operator splitting approach together with the frameworks of DOSnet \cite{lan2023dosnet} and PINNs \cite{raissi2019pinn}. In Section \ref{section 3}, we introduce the proposed PI-DOSnet for solving evolution PDEs. Section \ref{section 4} presents numerical examples that demonstrate the effectiveness and robustness of the method. Finally, Section \ref{section 5} concludes the paper.

\section{Preliminaries}\label{section 2}

In this section, we introduce the preliminaries of operator splitting, DOSnet, and PINNs, which provide the foundation for the subsequent development of PI-DOSnet.

\subsection{Operator splitting method}

For the numerical integration of ordinary differential equations, operator splitting methods \cite{g2002splitting,blanes2024splitting} decompose the original differential operator into several simpler components, solve each part separately, and then combine the partial solutions to approximate the overall solution. This approach greatly facilitates the treatment of complex PDEs that involve multiple physical processes (e.g., reaction and diffusion) or strong nonlinearities. By decoupling the problem into distinct subproblems, operator splitting leverages the structural properties of each component and enables the use of specialized numerical schemes, thereby significantly improving computational efficiency.

To illustrate the idea of operator splitting approach, we consider a general autonomous evolution PDE of the form
\begin{equation}
\left\{
\begin{aligned}
 \label{eq:original_equation}
	&u_t(t,\bm x) = \mathcal{L}u(t,\bm x)+\mathcal{N}u(t,\bm x):={\cal F}u(t,\bm x), &\boldsymbol{x}\in\Omega, \,t\in (0, t^\star],\\
			&u(0,\bm x)=u_0(\bm x),&\boldsymbol{x}\in\Omega,\\
			&\mathcal{B}u(t,\bm x)=0,&\boldsymbol{x}\in\partial\Omega,\, t\in (0,t^\star],
\end{aligned}
\right.
\end{equation}  
where $\mathcal{L}$, $\mathcal{N}$, and $\mathcal{B}$ denote the linear operator, the nonlinear operator, and the boundary condition operator, respectively. 
To obtain the solution at time $t^\star$, we subdivide the time interval $[0,\,t^\star]$ into $K$ sub-intervals. The nodes of sub-intervals are denoted as $0<t_1<t_2<\ldots<t_K=t^\star$ , and the lengths of sub-intervals are denoted as $\tau_1,\tau_2,\ldots,\tau_K$, respectively. The Lie-Trotter splitting method \cite{trotter1959ltsplit} yields a first-order approximation to the solution at time $t^\star$: 
\begin{equation}\label{eq:operator_split}
	u(T,\bm x)\approx e^{\tau_K \mathcal{N}}e^{\tau_K \mathcal{L}}\cdots e^{\tau_1 \mathcal{N}}e^{\tau_1 \mathcal{L}}u(0,\bm x).
\end{equation}
Another widely used splitting method with second order accuracy is Strang splitting \cite{strang1968ssplit}, given by
\begin{equation}
	u(T,\bm x)\approx e^{\frac{\tau_K}{2} \mathcal{L}}e^{\tau_K \mathcal{N}}e^{\frac{\tau_K}{2} \mathcal{L}}\cdots e^{\frac{\tau_1}{2} \mathcal{L}}e^{{\tau_1}\mathcal{N}}e^{\frac{\tau_1}{2} \mathcal{L}}u(0,\bm x).
\end{equation}

Due to its stability, accuracy, and strong robustness, operator splitting has been widely applied for solving nonlinear evolution PDEs. For instance, splitting schemes have been used for the Navier–Stokes equations \cite{kim1985ns}, the split-step Fourier method \cite{pathria1990SSFM, muslu2005SSFM} has been commonly employed for simulating the nonlinear Schr${\mathrm{\ddot{o}}}$dinger equation, and the Reference System Propagator Algorithm (RESPA) \cite{tuckerman1992respa} has been utilized to separate fast and slow forces in molecular dynamics simulations, among other applications. With the recent rise and rapid development of deep learning, the idea of operator splitting has also been incorporated into neural-network-based approaches for complex time-dependent problems. A representative example is DOSnet \cite{lan2023dosnet}, which we describe in following subsection.

\subsection{Deep Operator-Splitting Net}

Inspired by the  Lie-Trotter splitting method \cite{trotter1959ltsplit} and leveraging prior knowledge of the PDE, DOSnet \cite{lan2023dosnet} is a learning-based PDE solver that structures the network according to the system’s dynamics, a design referred to as Autoflow. The network consists of several blocks with identical input and output dimensions, mimicking the temporal evolution of the PDE. This non-black-box design is based on the underlying physical operators while incorporating learnable parameters, making it more flexible than standard operator splitting schemes. A schematic illustration of DOSnet is shown in Fig. \ref{fig:DOSnet} for clarity.

\begin{figure}[htbp]
	\centering
	\includegraphics[scale=0.26]{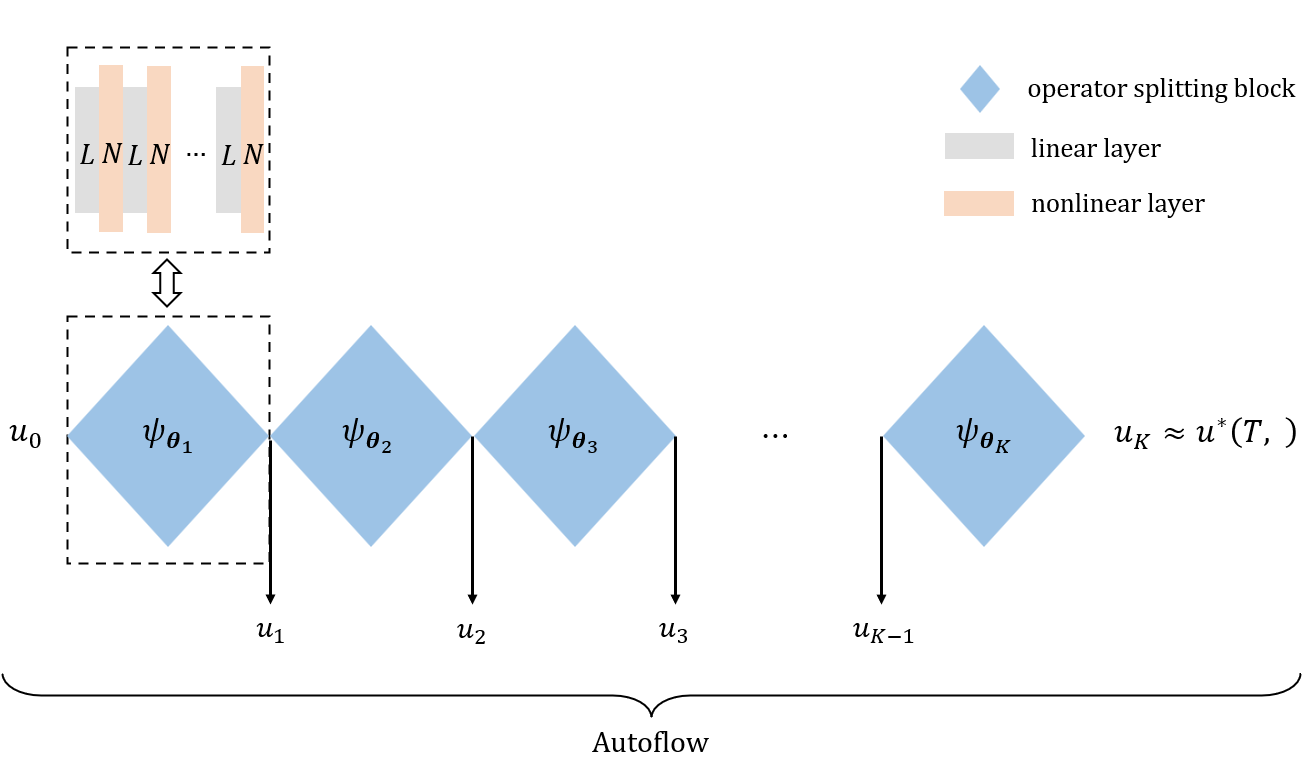}
	\caption{DOSnet structure.}
	\label{fig:DOSnet}
\end{figure}

In general, the input of the Autoflow is $u_0(\bm x)$, and its output represents the solution of the PDE at terminal time $T\leq t^\star$, i.e., $u(T,\bm x)$. Both are state functions in a functional space $\cal X$. The Autoflow can be regarded as an operator in $\cal X$, composed of several Operator Splitting Blocks (OSBs) that correspond to segmentation and advancement in time. Each block defines an operator $\psi_{\boldsymbol{\theta}_{i}}:\mathcal{X}\rightarrow\mathcal{X}$, where $\boldsymbol{\theta}_i$ is a learnable parameter.  Let the number of blocks be $K$; then, the overall operator $\psi_{\boldsymbol{\theta}_T}$ can be expressed as the composition of the individual blocks:
\begin{equation}
    \psi_{\boldsymbol{\theta}_T}=\psi_{\boldsymbol{\theta}_{K}}\circ\psi_{\boldsymbol{\theta}_{k-1}}\circ\cdots\circ\psi_{\boldsymbol{\theta}_{1}}.
\end{equation}
Unlike standard DNNs, DOSnet acts as a learnable operator in a functional space rather than merely mapping variables to functions. Consequently, both intermediate and final outputs maintain the same dimensionality as the input.

As shown in Fig. \ref{fig:DOSnet}, similar to the  Lie-Trotter splitting method \cite{trotter1959ltsplit}, each OSB in DOSnet is constructed by alternating linear layers and nonlinear layers. Specifically, the output of the $i$th block is expressed as
\begin{equation}
	u_i(\bm x)=\psi_{\boldsymbol{\theta}_i}(u_{i-1})=\phi_{\mathcal{N}_{M,i}}\circ\phi_{\mathcal{L}_{\boldsymbol{\theta}_{M,i}}}\circ \phi_{\mathcal{N}_{{M-1,i}}}\circ\phi_{\mathcal{L}_{\boldsymbol{\theta}_{M-1,i}}}\circ\cdots\circ \phi_{\mathcal{N}_{1,i}}\circ\phi_{\mathcal{L}_{\boldsymbol{\theta}_{1,i}}}u_{i-1}(\bm x),
	\label{eq:ith long}
\end{equation}
where $u_{i-1}$ indicates the output of $(i-1)$th block, $M$ denotes the number of linear-nonlinear layer pairs. Each linear layer $\phi_{\mathcal{L}_{\boldsymbol{\theta}_{l,i}}}$ is implemented using a convolution layer with learnable parameters $\boldsymbol{\theta}_{l,i}$, and each nonlinear layer is defined as $\phi_{\mathcal{N}_{l,i}}=e^{\tau_{l,i} \mathcal{N}}$, where $\tau_{l,i}$ is a positive variable satisfying $\sum_{l=1}^M\sum_{i=1}^K \tau_{l,i}=T$. $e^{\tau_{l,i} \mathcal{N}}$ serves as an activation function instead of ReLU \cite{glorot2011relu} or $\tanh$, as it better reflects the characteristics of the underlying PDE. If the $i$th block contains only a single linear and nonlinear layer, the output in (\ref{eq:ith long}) simplifies to
\begin{equation}
	u_i(\bm x)=\psi_{\boldsymbol{\theta}_i}(u_{i-1})=e^{\tau_{1,i} \mathcal{N}}\circ\phi_{\mathcal{L}_{\boldsymbol{\theta}_{1,i}}}u_{i-1}(\bm x).
\end{equation}

The learnable parameters $\bm{\theta}_T=\left\{{\boldsymbol{\theta}_{l,i}}\,|\,l=1,\,\ldots,M,\,i=1,\,\ldots,K\right\}$ in DOSnet are typically obtained by minimizing the loss function 
\begin{equation}
	\mathfrak{L}(\bm{\theta}_T)=\frac{1}{N}\sum_{n=1}^N \big\|\psi_{\bm{\theta}_T}\left( u_0^{(n)}(\bm x) \right) - u^{(n)}(T,\bm{x})\big\|_{L^2(\Omega)}, 
\end{equation}
where $\|\cdot\|_{L^2(\Omega)}$ denotes the $L^2$ norm over the domain $\Omega$, and ${\{u^{(n)}(T,\bm x)\}}_{n=1}^N$ is the set of reference data. Since this loss function relies on prior data, DOSnet cannot be regarded as a fully physics-informed operator network, even though it incorporates some prior knowledge of the PDE. In DOSnet, the positive variables $\tau_{l,i}$ can be either predefined equally or treated as learnable parameters, which also influences the computational cost during inference. A key advantage of DOSnet is that it balances computational complexity and network size. Its design mimics the additive structure of evolution flows, enabling a more compact network while accurately capturing the essential dynamics of the PDE. Compared with standard DNNs, DOSnet uses fewer parameters by restricting the input and output dimensions as well as the kernel sizes of convolution layers. Unlike traditional splitting methods, the stacked linear–nonlinear layer design allows the network to reduce splitting errors through training. For more details, we refer the reader to \cite{lan2023dosnet}.

Although only the final output of DOSnet is used in constructing the loss function $\mathfrak{L}$, DOSnet \cite{lan2023dosnet} indicates that the intermediate outputs can correspond to solutions at intermediate times. It should be noted that positive variables ${\{\tau_{l,i}\}}_{i,l}$ typically do not represent actual time steps. The true time steps  can be determined by comparing the intermediate outputs with reference solutions after training. Compared with the time step sizes used in the standard time integration schemes, DOSnet can effectively span relatively large “time step sizes" $\tau_{l,j}$. For example, 5-block-model with $M=2$ performs well for solving the Allen-Cahn (AC) equation in time interval $[0,5]$ \cite{lan2023dosnet}. For linear problems, it was observed in \cite{lan2023dosnet} that the blocks tend to divide the entire time trajectory equally, and the absolute values of the weights for each block are nearly identical.

\subsection{Physics-informed neural networks}
We take problem \eqref{eq:original_equation} as an example to briefly review PINNs \cite{raissi2019pinn}. Let $u_{\bm\theta}(t,\bm x)$ denote the output of a DNN, which aims to approximate $u(t,\bm x)$, where $\bm\theta$ represents all the learnable parameters of the network.
 By replacing $u(t,\bm x)$ with $u_{\bm\theta}(t,\bm x)$ in the governing equation, PINNs construct loss functions corresponding to the PDE, the initial condition, and the boundary condition, as follows:
\begin{equation}
    \begin{aligned}
        &\mathfrak{L}_r(\bm\theta)=\frac{1}{N_r}\sum\limits_{i=1}^{N_r}\Bigg\vert \frac{\partial u_{\bm\theta}(t,\bm x)}{\partial t}\Big\vert_{(t_i^r,\bm x_i^r)} - \mathcal{F}u_{\bm\theta}(t_i^r,\bm x_i^r)\Bigg\vert^2,\\
        &\mathfrak{L}_{ic}(\bm\theta)=\frac{1}{N_{ic}}\sum\limits_{i=1}^{N_{ic}}\Big\vert u_{\bm\theta}(0,\bm x_i^{ic}) - u_0(\bm x_i^{ic})\Big\vert^2,\\
        &\mathfrak{L}_{bc}(\bm\theta)=\frac{1}{N_{bc}}\sum\limits_{i=1}^{N_{bc}}\Big\vert \mathcal{B} u_{\bm\theta}(t_i^{bc},\bm x_i^{bc})\Big\vert^2,
    \end{aligned}
\end{equation}
where $\{t_i^r,\bm x_i^r\}_{i=1}^{N_r}$, $\{\bm x_i^{ic}\}_{i=1}^{N_{ic}}$, and $\{t_i^{bc},\bm x_i^{bc}\}_{i=1}^{N_{bc}}$ denote the collocation points corresponding to the PDE, initial condition, and boundary condition, respectively. The overall loss function in PINNs is then given by
\begin{equation}
    \mathfrak{L}(\bm\theta)=\lambda_r\mathfrak{L}_r+\lambda_{ic}\mathfrak{L}_{ic}+\lambda_{bc}\mathfrak{L}_{bc},
\end{equation}
where $\lambda_r$, $\lambda_{ic}$, and $\lambda_{bc}$ are weighting coefficients.
Minimizing this loss function yields the optimal network parameters $\bm \theta^\star$, which in turn provide an approximate solution $u_{\bm \theta^\star}$ to the problem \eqref{eq:original_equation}. In summary, PINNs reformulate the task of solving PDEs as an optimization problem. Their key advantage is that they provide a powerful framework that leverages physical knowledge, eliminating the need for prior data.

\section{Physics Informed DOSnet}\label{section 3}

\subsection{Network architecture and physics informed loss function of PI-DOSnet} 

As mentioned in the previous section, DOSnet constructs a neural network with near-optimal parameters utilizing data at time $T$, and then subsequently employs the trained network to predict solutions at a later time $\hat{T}>T$. However, generating such data for evolution PDEs is computationally very expensive. This limitation underscores the need for an alternative approach that relies directly on the governing equations. To address this challenge, we propose PI-DOSnet, a physics-informed variant of DOSnet tailored for time-dependent PDEs, and demonstrate its effectiveness in solving nonlinear evolution problems.

Specifically, consider the time-dependent PDE (\ref{eq:original_equation}), where the operators $\cal L$, $\cal B$ and $\cal N$ are independent of the time variable $t$. The objective of PI-DOSnet is to construct a mapping $\mathcal{G}:\,\mathcal{U}\subset \mc X\rightarrow \mathcal{S}\subset {\cal X}$ from the initial function space to the solution function space, enabling approximation of the solution to (\ref{eq:original_equation}) at any time $t$ for any given initial condition $u_0\in {\cal U}$. Similar to DOSnet, PI-DOSnet adopts the idea of operator splitting, handling the linear and nonlinear components using different mechanisms. In contrast to DOSnet, PI-DOSnet employs low-order Taylor expansions to approximate the linear operator $e^{\tau \mc L}$. These expansions play a crucial role, as they enable the construction of a fully physics-informed loss function without requiring any reference data, following the PINNs framework. 

To clearly introduce the proposed PI-DOSnet, we begin with a simplified configuration in which each block consists of one linear layer followed by one nonlinear layer. The "time step" parameters are set as $\tau_{1,1}=\tau_{1,2}=\ldots=\tau_{1,K}=dt=t/K$, where $K$ denotes the number of blocks. More complex architectures can be incorporated following the same methodology. For the linear layers, we utilize one or more convolution layers to approximate the operator $\mathcal{L}$, and then employ a low-order Taylor expansion to approximate the exponential map $e^{dt\mathcal{L}}$. For example,
\begin{equation}\label{Taylor expansion}
    e^{dt\mathcal{L}}\approx\mathcal{I}+dt\mathcal{L}_{\bm\theta_i}+\frac{{dt}^2}{2}\mathcal{L}_{\bm\theta_i}^2,
\end{equation}
where $\mc I$ is the identity operator and $\mathcal{L}_{\bm\theta_i}$ is the convolutional approximation of $\mathcal{L}$. 
The nonlinear layers in PI-DOSnet follow the same design as in DOSnet for approximating $e^{dt\mathcal{N}}$, which is obtained by exactly solving the subproblem $u_t={\cal N}u$. The architecture of each block of PI-DOSnet is shown at the top of Fig. \ref{fig:pidosnet_blocks}. As in DOSnet, the output of the $i$th block in PI-DOSnet is given by
\begin{equation}
    u_i(t_{i-1}+dt,\,\bm x)=\psi_{\boldsymbol{\theta}_i,dt}(u_{i-1})=e^{dt \mathcal{N}}\circ\Big(\mathcal{I}+dt\mathcal{L}_{\bm\theta_i}+\frac{{dt}^2}{2}\mathcal{L}_{\bm\theta_i}^2\Big)u_{i-1}(t_{i-1},\,\bm x),
\end{equation}
where $t_{i-1}=(i-1)\times dt$.
As shown in Fig. \ref{fig:pidosnet_blocks}, the complete mapping of PI-DOSnet takes the form
\begin{equation}\label{eq:pidosnet}
	\begin{aligned}
		&\hat{u}(t,\bm x)=\Phi_{\bm{\theta}_T}(u_0,\, t):=\psi_{\bm{\theta}_{K},dt}\circ\psi_{\bm{\theta}_{K-1},dt}\circ\cdots\circ\psi_{\bm{\theta}_1,dt}u_0(\bm x),\\
		& \psi_{\boldsymbol{\theta}_{i},dt}=e^{dt\mathcal{N}}\circ(\mathcal{I}+dt\mathcal{L}_{\boldsymbol{\theta}_i}+\frac{{dt}^2}{2}\mathcal{L}_{\boldsymbol{\theta}_i}^2),
	\end{aligned}
\end{equation}
where $\hat{u}(t,\bm x)$ denotes the output of PI-DOSnet that approximates the target function $u^\star(t,\,\bm x)$ for $t\in(0,T]$. Here $u^\star(t,\,\bm x)$ is the solution of (\ref{eq:original_equation}) with initial condition $u_0$. Because of the structure of PI-DOSnet, setting $t=0$ yields $dt=0$, $\psi_{i,dt}={\cal I}$, and thus the initial condition is satisfied automatically and exactly.

\begin{figure}[htbp]
	\centering
	\includegraphics[scale=0.28]{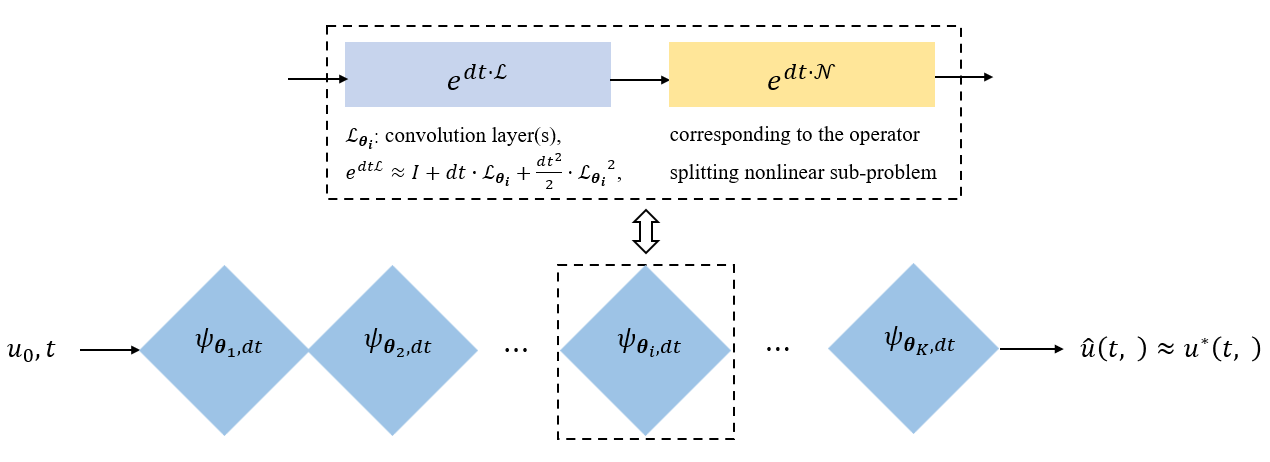}
	\caption{Architecture of PI-DOSnet.}
	\label{fig:pidosnet_blocks}
\end{figure}

Another key difference between PI-DOSnet and DOSnet is that PI-DOSnet explicitly incorporates the time variable $t$ as part of the network input, whereas DOSnet does not. As mentioned earlier, DOSnet only outputs an approximate solution at the final time $T$; its intermediate outputs can be interpreted as solutions at intermediate time steps only by comparison with reference data. Furthermore, the number of intermediate outputs is directly tied to the number of blocks in the network. This leads to a trade-off: generating more intermediate predictions requires adding more blocks, which substantially increases both the parameter count and the training cost of DOSnet. In contrast, PI-DOSnet treats the time variable $t$ as an input argument, allowing the prediction of approximate PDE solutions for any $t \in [0, T]$. This inference capability is independent of the number of network blocks. As a result, the number of blocks in PI-DOSnet is chosen solely based on the desired accuracy, thereby improving the model’s overall efficiency. In addition, this architectural design allows direct computation of derivatives of the output with respect to both temporal and spatial variables. 
\begin{itemize}
        \item {\textbf{ Spatial derivatives}}. Both PI-DOSnet and DOSnet take the initial condition $u_0$ as input, while the spatial variable $\bm{x}$ is not explicitly included as input. Although the network output approximates the solution $u(t,\bm{x})$, it remains an implicit function of the spatial variable $\bm{x}$. As a result, automatic differentiation cannot be used to compute spatial derivatives. To address this limitation, we employ finite difference schemes to approximate the required spatial derivatives.
    
	\item \textbf{Time derivative.} Since the time variable $t$ is an explicit input to PI-DOSnet, the time derivative $\frac{\partial u}{\partial t}$ can be computed directly using automatic differentiation \cite{paszke2017ad}, which is a standard technique in neural networks. In contrast, the standard DOSnet does not include $t$ as an input, and therefore automatic differentiation cannot be used to evaluate $\frac{\partial u}{\partial t}$. To remove the dependence on data at the final time $T$, one could combine the standard DOSnet with PINNs by approximating the time derivative through finite-difference schemes. However, due to the inherent limitations of finite difference methods, achieving adequate solution accuracy would require a large number of network blocks, resulting in a model with a substantial number of parameters and significantly increased computational cost.
\end{itemize}

As shown in Fig.~\ref{fig:pidosnet_method}, with the time and spatial derivatives computed, we can construct a loss function based on physical principles rather than reference data. To formalize this, we first introduce some notation. Define the residual of the PDE \eqref{eq:original_equation} as
\begin{equation}
	f:=u_t -\mathcal{F}u.
\end{equation}
Let $\{u_0^{(n)}\}_{n=1}^{N_u}$ denote the training set of initial functions. For each initial condition $u_0^{(n)}$, we denote the set of spatio-temporal sampling points and boundary points as ${\big\{\left(t_{r,\,j}^{(n)},{\bm x}_{r,\,j}^{(n)}\right)\big\}}_{j=1}^{N_r}$ and ${\big\{\left({t}_{b,\,j}^{(n)},{\bm x}_{b,\,j}^{(n)}\right)\big\}}_{j=1}^{N_b}$, respectively. The residual at a sampling point $\left(t_{r,\,j}^{(n)},{\bm x}_{r,\,j}^{(n)}\right)$ is then given by ${f}_{r,\,j}^{(n)}=({u}_{r,\,j}^{(n)})_{t} -\mathcal{F}{u}_{r,\,j}^{(n)}$. In this work, the spatial collection points for all initial functions are sampled from a pre-determined isometric set $\{\bm x_i\}_{i=1}^{N_x}$. The temporal collection points $0<{t}_{r,\,j}^{(n)}\leq T$ and $0<{t}_{b,\,j}^{(n)}\leq T$ are randomly sampled from the training time interval $(0,\,T]$. The loss function of PI-DOSnet is defined as
\begin{equation}\label{eq:loss}
	\mathfrak{L}(\bm{\theta}) =\lambda_r\mathfrak{L}_r+\lambda_b\mathfrak{L}_b,
\end{equation}
where
\begin{equation}
	\mathfrak{L}_r=\frac{1}{N_u}\frac{1}{N_r}\sum_{n=1}^{N_u}\sum_{j=1}^{N_r}{\big| {f}_{r,\,j}^{(n)}\big|}^2,
\end{equation}
and
\begin{equation}
	\mathfrak{L}_b=\frac{1}{N_u}\frac{1}{N_b}\sum_{n=1}^{N_u}\sum_{j=1}^{N_b}{\big| \mathcal{B}{u}_{b,\,j}^{(n)}\big|}^2.
\end{equation}
Here, $\mathfrak{L}_r$ and $\mathfrak{L}_b$ denote the PDE loss and boundary loss, respectively. $\lambda_r$ and $\lambda_b$ are user-defined weights. Minimizing (\ref{eq:loss}) yields the optimal network parameters. In practice, to control computational cost, a batch of training points is often used in each iteration. The boundary loss term $\mathfrak{L}_b$ may sometimes be omitted, particularly when the boundary conditions are inherently enforced by the network (e.g., periodic boundaries). For clarity, the remainder of this section omits the boundary loss term.

\begin{figure}[H]
	\centering
	\includegraphics[scale=0.3]{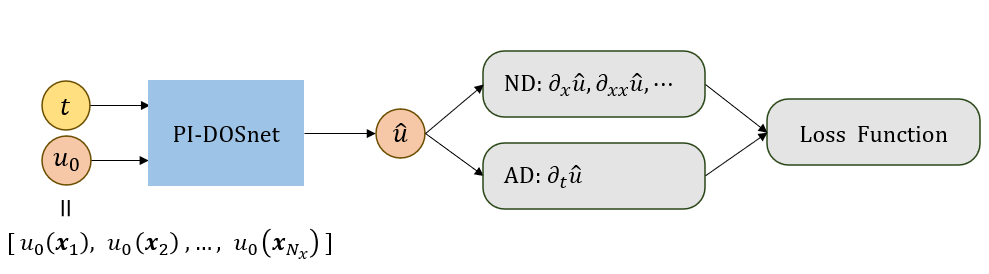}
	\caption{The diagram of loss function in PI-DOSnet.}
	\label{fig:pidosnet_method}
\end{figure}

\subsection{Training and inference procedures of PI-DOSnet}\label{subsection:learn_operators}

With the physics-informed loss function defined in (\ref{eq:loss}), PI-DOSnet can be trained to learn an approximate mapping $\Phi_{\bm{\theta}_T}\approx \mathcal {G}$ from the initial condition function space $\cal U$ to the solution function space $\cal S$. The training procedure is summarized in Algorithm \ref{algorithm1}, where the set of initial functions is typically generated randomly.

\begin{algorithm}[H] 
	\caption{Training stage of the PI-DOSnet.}
	\label{algorithm1}
	\begin{algorithmic}[1]
		\REQUIRE training time interval $[0,\,T]$, uniformly distributed spatial mesh points set $\{\bm x_i\}_{i=1}^{N_x}$, initial function set $\mathfrak{I}={\{u_{0}^{(n)}\}}_{n=1}^{N_{u}}$, training collection set $\mathcal{D}={\bigg\{{\big\{\left({t}_{r,\,j}^{(n)},{\bm x}_{r,\,j}^{(n)}\right)\big\}}_{j=1}^{N_r}, {\big\{\left({t}_{b,\,j}^{(n)},{\bm x}_{b,\,j}^{(n)}\right)\big\}}_{j=1}^{N_b}\bigg\}}_{n=1}^{N_u}$, blocks number $K$, number of convolution layers per block $M$, maximum iteration number $N_{iter}$, batch size $\zeta$.
		\FOR{$it=1,\ldots,N_{iter}$}
		\STATE Get $\zeta$ samples from collection set $\mathcal{D}$.
		\STATE Update the loss function $\mathfrak{L}(\bm{\theta}) $ using (\ref{eq:loss}).
		\STATE Update the parameter $\boldsymbol{\theta}$ by $\boldsymbol{\theta}\leftarrow \boldsymbol{\theta}-\gamma\frac{\partial \mathfrak{L}(\bm{\theta})}{\partial\boldsymbol{\theta}}$ using Adam, where $\gamma$ is the learning rate.
		\ENDFOR
		\ENSURE The mapping of the PI-DOSnet $\Phi_{\bm{\theta}_T}$.
	\end{algorithmic}
\end{algorithm}

After training, PI-DOSnet provides a mapping that transforms a given initial condition $u(0,\bm{x})\in {\cal U}$ to the solution $\hat{u}(t,\bm{x}):=\Phi_{\bm{\theta}_T}(u_0,\, t)$ for any $t\in[0,T]$. Although $t$ is an input to PI-DOSnet, directly setting $t=t^\star$ for $t^\star > T$ typically yields low-accuracy predictions. Inspired by \cite{lan2023dosnet, wang2021longtimepidpo}, we predict the solution $\hat{u}(t^\star,\bm{x})$ iteratively as follows. Suppose $t^\star \in (mT, (m+1)T]$ for some integer $m$. Starting from the input $u(0,\bm{x})$, we first generate the solution at time $T$, i.e., $\hat{u}(T,\bm{x}):=\Phi_{\bm{\theta}_T}(u_0,\, T)$. This output then serves as the input to produce $\hat{u}(2T,\bm{x}):=\Phi_{\bm{\theta}_T}(\hat{u}(T,\bm{x}),\, T)$. By repeating this process $m+1$ times, we obtain the desired solution $\hat{u}(t^\star,\bm{x})$. This iterative approach underlies the capability of PI-DOSnet to predict long-time solutions of problem (\ref{eq:original_equation}). 
Given a set of initial conditions $\mathfrak{I}:=\mathfrak{I}_0=\left\{u_0^{(n)}\right\}_{n=1}^{|\mathfrak{I}_0|}$, a sequence of approximate solutions $\mathfrak{I}_t:=\left\{\hat{u}^{(n)}(t,\,\bm{x})\right\}_{n=1}^{|\mathfrak{I}_0|}$ can be generated using the this recursive procedure. However, oscillations may occur during inference, particularly when the subspace spanned by the training initial functions in $\mathfrak I$ is insufficient to capture the significantly evolved states of problem (\ref{eq:original_equation}).

To evaluate the prediction accuracy of PI-DOSnet's output $\hat{u}^{(n)}(t^{\star},\bm{x})$ during inference, we introduce the mean square error (MSE) as a quantitative metric:
\begin{equation}\label{eq: res_tstar}
	{\mathcal R}_{t} = \frac{1}{|\mathfrak{I}_0|}\frac{1}{N_x}\sum\limits_{n=1}^{|\mathfrak{I}_0|}\sum\limits_{j=1}^{N_x} {\Big\vert {\partial_t \hat{u}^{(n)}(t,\bm{x}_j)}-{\cal F}\hat{u}^{(n)}(t,\bm{x}_j)\Big\vert}^2+{\Big\vert {\cal B}\hat{u}^{(n)}(t,\bm{x}_j)\Big\vert}^2,
\end{equation}
which serves as an indicator of the reliability of the predicted solutions. If the MSE at time $t$ satisfies 
\begin{equation}\label{eq: res_ineq}
	{\mathcal R}_{t}\leq\epsilon\cdot {\mathcal R}_T,
\end{equation}  
where $\epsilon\geq1$ is a user-defined tolerance parameter,
the predicted solutions $\hat{u}^{(n)}(t,\bm{x})$ are considered valid. During the recursive inference procedure, if condition \eqref{eq: res_ineq} is  violated at some time $t=mT$, this indicates that the trained mapping is no longer sufficiently accurate for transforming certain inputs in the set $\{\hat{u}^{(n)}((m-1)T,\bm{x})\}_{n=1}^{|\mathfrak{I}_0|}$ to the corresponding solutions of problem (\ref{eq:original_equation}) at $t=mT$. To improve the accuracy of PI-DOSnet in such cases, the training set $\mathfrak{I}$ is augmented with functions from the set $\{\hat{u}^{(n)}((m-1)T,\bm{x})\}_{n=1}^{|\mathfrak{I}_0|}$, and PI-DOSnet is retrained using Algorithm~\ref{algorithm1}. Various strategies exist for augmenting the training set $\mathfrak{I}$. In this work, we expand $\mathfrak{I}$ by randomly selecting and adding $N_{add}$ functions from the set $\{\hat{u}^{(n)}((m-1)T,\bm{x})\}_{n=1}^{|\mathfrak{I}_0|}$, where $N_{add}$ is a user-defined number. 
The overall inference and retraining procedure of PI-DOSnet is summarized in Algorithm \ref{algorithm2}. 
\begin{algorithm}[H]
	\caption{Inference \& retraining \& test stage of the PI-DOSnet.}
	\label{algorithm2}
	\begin{algorithmic}[1]
		\REQUIRE Termination inference time $t^\star\in(mT,\,(m+1)T]$ ($m>0$), constant $\epsilon$, number of initial functions to be added $N_{add}<N_u:=|\mathfrak{I}_{\mathrm 0}|$, $\mathfrak{I}_{\mathrm 0}:={\mathfrak{I}}$, and $\mathfrak{I}^{\mathrm{test}}_{\mathrm 0}$.
        \STATE {\bf Inference \& retraining stage }
		\FOR{$s=1,\ldots, m $}
		\STATE Replace the initial function set with $\mathfrak{I}_{(s-1)T}:=\{\hat{u}^{(n)}((s-1)T, \bm{x})\}_{n=1}^{N_u}$ which is inferred from the initial initial function set ${\mathfrak{I}}_{\mathrm 0}$ or ${\mathfrak{I}}_{(s-2)T}$.
		\STATE Predict the PDE solution at time $sT$, i.e. $\mathfrak{I}_{sT}:=\{\hat{u}^{(n)}(sT, \bm{x})\}_{n=1}^{N_u}$.
            \STATE Compute MSE ${\cal R}_{sT}$ via (\ref{eq: res_tstar}).
		\IF{${\cal R}_{sT} > \epsilon\cdot {\cal R}_T$}
		\STATE Select $N_{add}$ samples from $\mathfrak{I}_{(s-1)T}$ and add them into initial function set $\mathfrak{I}$.
		\STATE Retraining the PI-DOSnet using Algorithm \ref{algorithm1} with the updated initial function set $\mathfrak{I}$ until convergence.
		\STATE Break and go back to line 2 by setting $s=1$.
		\ENDIF
		\ENDFOR
        \STATE {\bf Test stage }
		\FOR{$s=1,\ldots, m +1$}
		\STATE Replace the initial function set with ${\mathfrak{I}}^{\mathrm{test}}_{(s-1)T}$.
		\STATE Get the solutions of given PDE at time interval $[(s-1)T,\,\min\{sT, t^\star\}]$.
		\ENDFOR
		\ENSURE Approximate solutions at $\{\bm x_i\}_{i=1}^{N_x}$ of given PDE with initial conditions belonging $\mathfrak{I}_{\mathrm 0}$ and $\mathfrak{I}^{\mathrm{test}}_{\mathrm 0}$.
	\end{algorithmic}
\end{algorithm}

\subsection{Linear stability analysis}
As discussed in subsection \ref{subsection:learn_operators}, PI-DOSnet is designed to perform long-time prediction after being trained on a prescribed time interval $[0,\,T]$. In this setting, ensuring the stability of PI-DOSnet is essential for maintaining the reliability of its predictions over extended time horizons. A further motivation for conducting a stability analysis arises from our use of a relatively large “time-step’’ parameter, which reduces both the number of blocks in PI-DOSnet and the total number of trainable parameters. Since the training cost scales with the number of trainable parameters and the inference cost scales with the number of blocks, adopting a larger “time-step’’ leads to reduced computational cost in both training and inference. Therefore, this results in  a large ratio between the time step and the spatial mesh size, thereby imposing stringent stability requirements on the PI-DOSnet architecture.

Let us denote $\mc{L}_{\bm\theta}$ as the convolution operation within a block. The linear operation of a block in PI-DOSnet can be expressed as
\begin{equation}
\label{taylor-expansion-1}
    \tilde{\cal{L}_{\bm\theta}}(u)=(\mc{I}+dt\cdot\mc{L}_{\bm\theta}+\frac{dt^2}{2}\mc{L}_{\bm\theta}^2)(u),
\end{equation}
which provides a second-order approximation to the exact linear evolution operator 
\begin{equation}
    e^{dt\cdot\mc{L}_\theta}(u)=(\mc{I}+dt\cdot\mc{L}_\theta+\frac{dt^2}{2}\mc{L}_\theta^2+\cdots)(u).
\end{equation}

Following the idea of \cite{ju2015etd1,zhu2016etd2}, we analyze the stability of the linear test equation
\begin{equation}
    u_t=\mathcal{L}u+\lambda u,
\end{equation}
where $\mathcal{L}u=-q_{\bm\theta} u$, $q_{\bm\theta}>0$ is real, and $\lambda$ is complex-valued. The learned mapping across the $(i + 1)$th block is given by
\begin{equation}\label{scheme}
    u_{i+1}=e^{\lambda dt}\cdot(1-q_{\bm\theta}dt+\frac{1}{2}q_{\bm\theta}^2{dt}^2)u_i,
\end{equation}
where $u_i$ and $u_{i+1}$ represent the outputs of the $i$th block and the $(i + 1)$th block, respectively. The growth factor corresponding to (\ref{scheme}) is
\begin{equation}
    \xi=(1-q_{\bm\theta}dt+\frac{1}{2}q_{\bm\theta}^2{dt}^2)e^{\mathrm{Re}(\lambda dt)+\mathrm{i}\mathrm{Im}(\lambda dt)}.
\end{equation}
The stability boundary, defined by $|\xi|=1$, can be expressed as
\begin{equation}
    \left\{
    \begin{aligned}
        &(1-q_{\bm\theta}dt+\frac{1}{2}q_{\bm\theta}^2{dt}^2)e^{\mathrm{Re}(\lambda dt)}=1,\\
        &\mathrm{Im}(\lambda dt)=\alpha\pm2k\pi,
    \end{aligned}\right.
\end{equation}
which is equivalent to 
\begin{equation}
     \mathrm{Re}(\lambda dt)=\ln\bigg(\frac{1}{1-q_{\bm\theta}dt+\frac{1}{2}q_{\bm\theta}^2{dt}^2}\bigg).
\end{equation}
Figure~\ref{stable_domain} shows the stability regions for $q_{\bm\theta}dt=1$, $1/2$, and $1/3$. For $0 < q_{\bm\theta}dt < 1$, the stable region expands as $q_{\bm\theta}dt$ increases, and the intersection of all such regions corresponds to the left half-plane. However, for $q_{\bm\theta}dt\geq 2$, the stability region lies entirely outside the right half-plane, indicating that the second-order Taylor expansion is unconditionally unstable for any $\lambda>0$.   
\begin{figure}[H]
    \centering
    \includegraphics[scale=0.5]{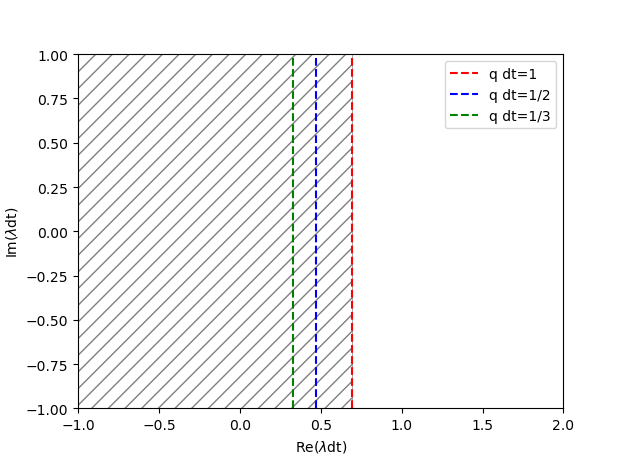}
    \caption{Stable regions of the proposed scheme with $q_{\bm\theta}dt=1,1/2,1/3$.}\label{stable_domain}
\end{figure}

If the nonlinear operator ${\cal N}$ satisfies $\max\limits_{u}\frac{\partial ({\cal N} u)}{\partial  u}\leq 0$, i.e. $\mathrm{Re}(\lambda ) \leq 0$, then PI-DOSnet is unconditionally stable. In contrast, when $\mathrm{Re}(\lambda ) > 0$, PI-DOSnet becomes only conditionally stable, and the time step $dt$ must satisfy
\begin{equation}\label{CFL-condition}
     dt\leq \frac1 {\mathrm{Re}(\lambda )}\ln\bigg(\frac{1}{1-q_{\bm\theta}dt+\frac{1}{2}q_{\bm\theta}^2{dt}^2}\bigg),
\end{equation}
which plays a role analogous to the Courant–Friedrichs–Lewy (CFL) condition in classical numerical schemes. This constraint can be equivalently expressed as $q_{\bm\theta} dt \leq C$. When $-{\cal L}$ is constructed using traditional numerical discretization, its eigenvalue $q_{\bm\theta}$ depends on the spatial mesh size $h$ and typically diverges as $h \to 0$. Consequently, the time step must decrease proportionally with mesh refinement, which severely limits the efficiency of high-resolution simulations. In contrast, as reported in subsection~\ref{subsection:AC}, the eigenvalue $q_{\bm\theta}$ of the learned linear operator $-{\cal L}_\theta$ is far less sensitive to the mesh size $h$. This favorable property enables PI-DOSnet to perform long-time inference using substantially larger time steps even when small mesh sizes are employed. 
\begin{remark}
In the above analysis, we assume that all eigenvalues of $-{\cal L}_{\bm\theta}$ are positive real numbers. However, without additional constraints on the convolution kernel, some eigenvalues of $-{\cal L}_{\bm\theta}$ may become complex, i.e., $q_{\bm\theta}\in\mathbb{C}$. In this scenario, the boundary of the stability region, characterized by $|\xi|=1$, can be expressed as
\begin{equation}
    \left\{
    \begin{aligned}
        &\left|1-q_{\bm\theta}dt+\frac{1}{2}q_{\bm\theta}^2{dt}^2\right|e^{\mathrm{Re}(\lambda dt)}=1,\\
        &\mathrm{Im}(\lambda dt)=\tilde{\alpha}\pm2k\pi.
    \end{aligned}\right.
\end{equation}
When $\mathrm{Re}(\lambda ) > 0$, the PI-DOSnet is conditionally stable and the time step $dt$ must satisfy
\begin{equation}\label{CFL-condition-1}
     dt\leq \frac1 {\mathrm{Re}(\lambda )}\min\limits_{q_{\bm\theta}\in \Lambda(-{\cal L}_{\bm\theta})}\ln\bigg(\frac{1}{\left|1-q_{\bm\theta}dt+\frac{1}{2}q_{\bm\theta}^2{dt}^2\right|}\bigg),
\end{equation}
where $\Lambda(-{\cal L}_{\bm\theta})$ denotes the spectrum of $-{\cal L}_{\bm\theta}$.
\end{remark}

\subsection{Error analysis of low-order Taylor expansion}
The error analysis for the operator splitting block of DOSnet was conducted in \cite{lan2023dosnet}, where it was shown that the approximation error of the learnable layer is of order $dt^3$. Since PI-DOSnet shares the same block structure as DOSnet, the approximation error originating from the operator splitting block is of the same order. Therefore, we omit a detailed analysis of this component for PI-DOSnet. In this subsection, we focus on the error analysis associated with the low-order Taylor expansion \eqref{Taylor expansion}.
The approximation error of \eqref{Taylor expansion} at a spatial point $\bm x_j$ can be expressed as
\begin{equation}
    \begin{aligned}
    \Big\vert [e^{dt\cdot\mc{L}}(u)]({\bm x}_j)-[\tilde{\mc{L}_{\bm\theta}}(\bm u)]({\bm x}_j)\Big\vert \leq &\ \Big\vert [e^{dt\cdot\mc{L}}(u)]({\bm x}_j)-[(\mc{I}+dt\cdot\mc{L}+\frac{dt^2}{2}\mc{L}^2)(u)]({\bm x}_j)\Big\vert\\
    &+\Big\vert dt\cdot[\mc{L}(u)-\mc{L}_{\bm\theta}(\bm u)]({\bm x}_j)+\frac{dt^2}{2}[\mc{L}^2(u)-\mc{L}_{\bm\theta}^2(\bm u)]({\bm x}_j)\Big\vert\\
    =&\ \epsilon_1+\epsilon_2,
    \end{aligned}
\end{equation}
where $\epsilon_1$ denotes the Taylor expansion error and $\epsilon_2$ accounts for the approximation error between $\mc{L}$ and $\mc{L}_{\bm\theta}$. To complete the error analysis, we assume that $\mc{L}$ is translationally equivariant \cite{cohen2016equiconv}.

\begin{definition}[Translational equivariant operator]\label{translation equivariant}
    A linear operator $\mc L$ is said to be translationally equivariant if 
    \begin{equation}
        \mc L( f(x-a))=(\mc L f)(x-a).
    \end{equation}
\end{definition}

For a common linear and translation-equivariant operator $\cal{L}$, we can approximate it using finite difference schemes and subsequently predict the values of $[{\cal L}u]({\bm x}_j)$ for a given function $u$. As an example, consider the one-dimensional Laplace operator $\mc L=\Delta $, which can be approximated as:
$$
\Delta u(x_j)=\frac 1 {h^2}[u(x_j+h)+u(x_j-h)-2u(x_j)],
$$
where $h$ denotes the mesh size associated with the discrete resolution.
This approximation can be expressed as a convolution operation ${\cal L}_{\bm\theta}$ with a single-layer depth and a kernel of length 3, i.e., $(1/h^2[1,-2,1])$. For a sufficiently smooth function $u$, the error of $\tilde{\mc L}_{\theta}$ in approximating $e^{dt\cdot \Delta} $ satisfies
\begin{equation}\label{approximation error}
    \Big\vert [e^{dt\cdot\Delta}(u)]({\bm x}_j)-[\tilde{\mc{L}_{\bm\theta}}( u)]({\bm x}_j) \Big\vert \leq C_1\cdot dt^3+C_2\cdot dt\cdot h^2,
\end{equation}
where $C_1$, $C_2$ are constants depending on $\mc L$ and $u$.
Therefore, for a translation-equivariant operator such as $\Delta$, and for sufficiently small mesh size $h$, there exists a convolution operator $ {\cal L}_\theta$ corresponding to a given finite difference scheme in space such that the error term $\epsilon_2$ can be bounded by a polynomial function of $h$. 
The following theorem states that the error term $\epsilon_1$ is bounded by $Cdt^3$ for any compact self-adjoint operator. 

\begin{theorem}
\label{expansionerror}
    Let $\mc X$ be a Hilbert space, and let $\mc L:\mc X\rightarrow \mc X$ be a linear, compact, self-adjoint operator. Then, for any $u\in \mc X$ with $\|u\|_{\mc X}\leq C_0$, we have
    \begin{equation}
        \Big\| e^{dt\cdot\mc{L}}u-(\mc{I}+dt\cdot\mc{L}+\frac{dt^2}{2}\mc{L}^2)u\Big\|_{\mc X} \leq C  dt ^3, 
    \end{equation}
    where $C$ is a constant depending on $\|u\|_{\mc X}$ and $\|\mc L\|_{\mc X\rightarrow\mc X}$.
\end{theorem}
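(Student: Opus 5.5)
Since $\mc L$ is compact and self-adjoint on the Hilbert space $\mc X$, we will use the spectral theorem to write $\mc L$ in diagonal form and reduce the operator estimate to a scalar Taylor estimate on each eigenvalue. (Boundedness of $\mc L$ alone would give the estimate directly from the exponential series, and we note this as a backup route.)

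\textbf{Step 1: diagonalize.} By the spectral theorem for compact self-adjoint operators, there is an orthonormal family $\{e_k\}$ of eigenvectors with real eigenvalues $\mu_k$, $|\mu_k|\le\|\mc L\|_{\mc X\rightarrow\mc X}$. Write $u=u_{\ker}+\sum_k \langle u,e_k\rangle e_k$, where $u_{\ker}\in\ker\mc L$. Then
\[
e^{dt\mc L}u=u_{\ker}+\sum_k e^{dt\mu_k}\langle u,e_k\rangle e_k ,
\]
and the quadratic Taylor polynomial $(\mc I+dt\mc L+\frac{dt^2}{2}\mc L^2)u$ acts in the same way with $e^{dt\mu_k}$ replaced by $1+dt\mu_k+\frac{dt^2}{2}\mu_k^2$. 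On $u_{\ker}$ both operators act as the identity, so the kernel component contributes no error.

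\textbf{Step 2: scalar remainder.} For real $s=dt\mu_k$, Taylor's formula with Lagrange remainder gives
\[
\Big|e^{s}-1-s-\tfrac{s^2}{2}\Big|\le \frac{|s|^3}{6}e^{|s|}.
\]
Hence each coefficient of the error is bounded by $\frac{dt^3\|\mc L\|^3}{6}e^{dt\|\mc L\|}\,|\langle u,e_k\rangle|$.

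\textbf{Step 3: sum.} By Parseval,
\[
\Big\|e^{dt\mc L}u-(\mc I+dt\mc L+\tfrac{dt^2}{2}\mc L^2)u\Big\|_{\mc X}\le \frac{\|\mc L\|^3}{6}e^{dt\|\mc L\|}\|u\|_{\mc X}\,dt^3\le C\,dt^3 .
\]
Here $C=\frac{1}{6}\|\mc L\|^3 e^{dt_0\|\mc L\|}C_0$, assuming $dt\le dt_0$ for a fixed bound $dt_0$. This constant depends only on $\|u\|_{\mc X}$ and $\|\mc L\|_{\mc X\rightarrow\mc X}$, as the statement requires.

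\textbf{Main obstacle.} The only delicate point is the factor $e^{dt\|\mc L\|}$. Since the eigenvalues $\mu_k$ may be positive, it does not reduce to $1$, so $C$ must be taken uniform over a bounded range of $dt$. We will make this assumption explicit; alternatively, when $\mc L\le 0$ (the dissipative case relevant to the paper), $e^{s}\le 1$ for $s\le 0$ removes the exponential factor entirely.

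Compactness is used only to justify the eigen-expansion. The same bound also follows directly from the series estimate
\[
\Big\|\sum_{n\ge3}\frac{dt^n\mc L^n}{n!}\Big\|\le \frac{(dt\|\mc L\|)^3}{6}e^{dt\|\mc L\|},
\]
which holds for any bounded $\mc L$, and we will note this as a remark.
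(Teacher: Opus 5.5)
Your proposal is correct and follows the paper's proof essentially step for step. You diagonalize the compact self-adjoint $\mathcal{L}$, apply the scalar bound $\bigl|e^{s}-1-s-\tfrac{s^{2}}{2}\bigr|\le \tfrac{|s|^{3}}{6}e^{|s|}$ at $s=dt\,\mu_k$ with $|\mu_k|\le\|\mathcal{L}\|$, and sum via Parseval; your separate handling of the kernel component and your explicit note that $C$ must depend on an upper bound $dt_0$ through $e^{dt\|\mathcal{L}\|}$ (which the paper absorbs silently into $C$) are worthwhile refinements, and your series remark correctly shows that compactness and self-adjointness are not needed for this estimate.
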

\begin{proof}
    Since $\mathcal{L}$ is a compact self-adjoint operator on the Hilbert space $\mathcal{X}$, there exists an orthonormal basis $\{e_k\}_{k\geq 1}$ consisting of eigenvectors of $\mathcal{L}$ with corresponding real eigenvalues $\{\lambda_k\}$ such that $|\lambda_1|\geq |\lambda_2|\geq\cdots\rightarrow 0$.
    It follows that the operator norm satisfies $\|\mc L\|_{\mc X\rightarrow\mc X}=|\lambda_1|$.
    For any $u\in\mathcal{X}$, we expand $u$ in the eigen-basis as $u=\sum\limits_{k=1} u_k e_k$, where $u_k=\langle u,e_k\rangle$. Then, we have 
    \begin{equation}
    \label{eigendecom}
        e^{dt \mathcal{L}}u=\sum\limits_{k=1}e^{dt\lambda_k}u_ke_k,\quad (\mathcal{I}+dt \mathcal{L}+\frac{{dt} ^2}{2}\mathcal{L}^2)u=\sum\limits_{k=1} (1+dt \lambda_k+\frac{{dt} ^2}{2}\lambda_k^2)u_k e_k.
    \end{equation}

    For any $x\in\mathbb{R}$, using the Taylor expansion of the exponential function, the remainder satisfies
    \begin{equation}
       |R(x)|:= \left|e^x-\left(1+x+\frac{x^2}{2}+R(x)\right)\right|=\left|\frac{1}{2}\int_0^x(x-s)^2e^s\mathrm{d}s\right|\leq \frac{\vert x\vert ^3}{2} \int_0^1 (1-\tilde s)^2 e^{\vert \tilde{s}x \vert} \mathrm{d}\tilde{s}\leq \frac{\vert x \vert^3}{6}e^{\vert x\vert}.
    \end{equation}
    Substituting $x=dt\lambda_k$ yields
    \begin{equation}\label{eq: upper_bound}
    \Big\vert e^{dt\lambda_k}-\left(1+dt\lambda_k+\frac{{dt}^2}{2}\lambda_k^2\right)\Big\vert\leq \frac{\vert dt\lambda_k\vert ^3}{6}e^{dt|\lambda_k|}.
    \end{equation}
    Combining \eqref{eigendecom} with \eqref{eq: upper_bound}, we have
    \begin{equation}
        \begin{aligned}
        \Big\| e^{dt\mathcal{L}}u-&\left(\mathcal{I}+dt\mathcal{L}+\frac{dt^2}{2}\mathcal{L}^2\right)u\Big\|_{\mathcal{X}}^2=\sum\limits_{k=1}\Big\vert e^{dt\lambda_k}-\left(1+dt\lambda_k+\frac{dt^2}{2}\lambda_k^2\right) \Big\vert^2\vert u_k\vert^2\\
        &\leq \sum\limits_{k=1}\Big\vert \frac{\vert dt\lambda_k\vert ^3}{6}e^{dt|\lambda_k|} \Big\vert^2  \vert u_k\vert^2\\
        &\leq \Big\vert \frac{\vert dt\lambda_1\vert ^3}{6}e^{dt|\lambda_1|} \Big\vert^2 \sum\limits_{k=1}  \vert u_k\vert^2\leq C dt^6,
    \end{aligned}
    \end{equation}
    which completes the proof of Theorem~\ref{expansionerror}.
    
\end{proof}

Theorem~\ref{expansionerror} provides a uniform upper bound for the expansion error $\epsilon_1$, which is independent of the parameters of PI-DOSnet and can be reduced by decreasing the time step $dt$. This explains the high accuracy of PI-DOSnet when a small time step and a convolution operator constructed from a finite difference scheme are used, an arrangement closely resembling a traditional explicit Euler method. However, when a relatively large time step size is employed (e.g., $dt=1/4$), the expansion error $\epsilon_1$ may no longer be negligible. In such cases, this error can be mitigated by enlarging the kernel size of the convolution operator. The numerical experiments in the next section demonstrate that this compensation can indeed be achieved after training PI-DOSnet using Algorithm~\ref{algorithm1}. Notably, the loss function in Algorithm~\ref{algorithm1} for training $\tilde{{\cal L}}_\theta$ involves only a limited number of training functions, which span a subspace $\mc X_0\subset \mc X$. Therefore, if the functions $\hat{u}^{(n)}((s-1)T,x)$ do not lie within $\mc X_0$, the corresponding inference errors produced by Algorithm~\ref{algorithm2} may become relatively large at time $sT$. In this situation, PI-DOSnet requires retraining. The numerical results in the next section show that this retraining strategy effectively reduces inference errors and ensures reliable long-time prediction.

\section{Numerical experiments} \label{section 4}

In this section, we demonstrate the effectiveness of the proposed algorithm through a series of comprehensive numerical experiments. To quantitatively evaluate the performance of PI-DOSnet, we introduce two error indicators: the relative $L^2$ error at a specific time instant and the accumulative relative $L^2$ error over a time interval. For a given time $\tilde{t}$, the relative $L^2$ error is defined as
\begin{equation}
	\text{relative $L^2$ error at 	$\tilde{t}$}=\frac{1}{N_{test}}\sum_{n=1}^{N_{test}}\frac{{\Big(\sum\limits_{j=1}^{N_x} \big(\hat{u}^{(n)}(\tilde{t},\,\bm x_j)-u^{\star,(n)}(\tilde{t},\bm x_j)\big)^2\Big)}^{\frac{1}{2}}}{{\Big(\sum\limits_{j=1}^{N_x} u^{\star,(n)}(\tilde{t},\bm x_j)^2\Big)}^{\frac{1}{2}}},
\end{equation}
where $u^{\star,(n)}$ denotes the reference solutions corresponding to the $n$-th initial condition. The accumulative relative $L^2$ error over time period $[0,\tilde{t}] $ is defined as
\begin{equation}
	\text{accumulative relative $L^2$ error}=\frac{1}{N_{test}}\sum_{n=1}^{N_{test}}\frac{{\Big(\sum\limits_{i=0}^{k}\sum\limits_{j=1}^{N_x} \big(\hat{u}^{(n)}(i\Delta t,\,\bm{x}_j)-u^{\star,(n)}(i\Delta t,\,\bm x_j)\big)^2\Big)}^{\frac{1}{2}}}{{\Big(\sum\limits_{i=0}^{k}\sum\limits_{j=1}^{N_x} u^{\star,(n)}(i\Delta t,\,\bm x_j)^2\Big)}^{\frac{1}{2}}}. 
\end{equation}
where the time step size $\Delta t=\tilde{t}/k$.
The proposed PI-DOSnet is implemented in the PyTorch deep learning framework. All models are trained using the Adam optimizer \cite{kingma2014adam}, with the main hyperparameters detailed for each example.

\subsection{Convection equation}\label{section 4.1}

The first benchmark example considers a one-dimensional convection equation:
\begin{equation}\label{eq: convection}
	\left\{\ 
	\begin{aligned}
		&\frac{\partial u}{\partial t} + \beta \frac{\partial u}{\partial x} = 0,&\quad x\in(-\pi,\pi),t>0,\\
		&u(0,x)=u_0(x),&\quad x\in[-\pi,\pi],\\
		&u(t,-\pi)=u(t,\pi),
	\end{aligned}\right.
\end{equation}
where $\beta$ denotes the convection coefficient. The exact solution of (\ref{eq: convection}) is $u(t,x)=u_0(x-\beta t)$, which exhibits a shift-in-time property. The initial functions are defined by
\begin{equation}
	u(0,x)=\sum_{i=1}^m \big(c_i\sin(ix) + q_i\cos(ix)\big),
\end{equation}
with $m=10$ and the coefficients $c_i$, $q_i$ sampled from the standard normal distribution $\mathcal{N}(0,1)$. For training and testing, $300$ isometric spatial points in $[-\pi,\pi]$ are used. The PI-DOSnet architecture consists of 4 blocks, each with channel configuration 1-4-1. The kernel size for the convolution layers is set to 61. In this setting, the time step size $dt=0.075$. The periodic boundary conditions are enforced by setting $\hat{u}(t,-\pi)=\hat{u}(t,\pi)$. We train the PI-DOSnet model for 20,000 epochs with $T=0.3$ and subsequently use the trained PI-DOSnet to predict solutions at times $2T,3T,\ldots,10T$ in a single forward pass. Fig.~\ref{fig:con_snapshot} compares the PI-DOSnet predictions with the exact solutions at times $T$ and $10T$, showing excellent agreement. Since PI-DOSnet can predict the solution of (\ref{eq: convection}) at arbitrary time $t$, Fig. \ref{fig:con_cmap} presents the distribution of $\hat{u}(t,x)$ together with the absolute error relative to the exact solution. Even at $t=10T$, the absolute error remains very small (below 0.05), and the average relative $L^2$ error of solutions over the entire space–time domain is $3.149\times10^{-3}$. In contrast to DOSnet, which requires training data at each queried time, PI-DOSnet directly predicts solutions at arbitrary time instances without additional data, underscoring its clear advantage.
\begin{figure}[H] 	
        \begin{minipage}[t]{0.29\textwidth}
		\centering
		\includegraphics[scale=0.31]{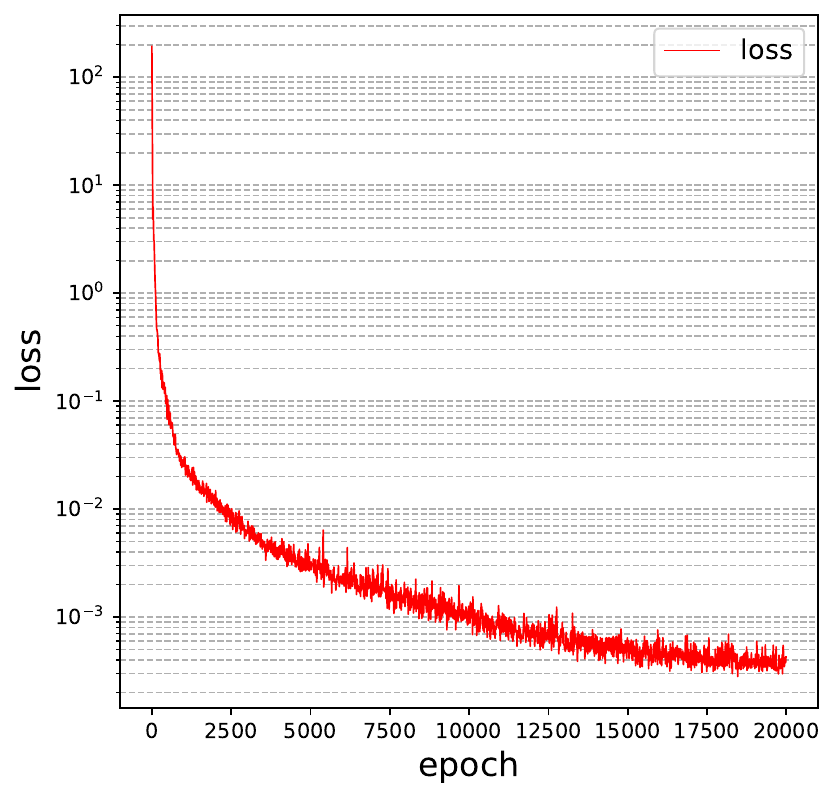}
	\end{minipage}
	\begin{minipage}[t]{0.34\textwidth}
		\centering
		\includegraphics[scale=0.34]{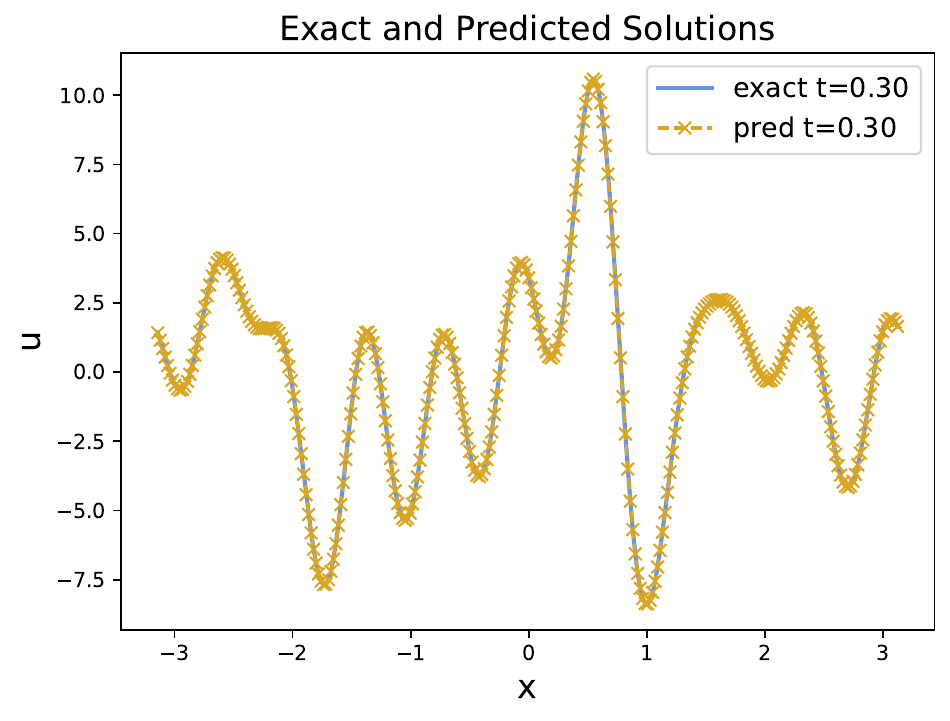}
	\end{minipage}
	\begin{minipage}[t]{0.34\textwidth}
		\centering
		\includegraphics[scale=0.34]{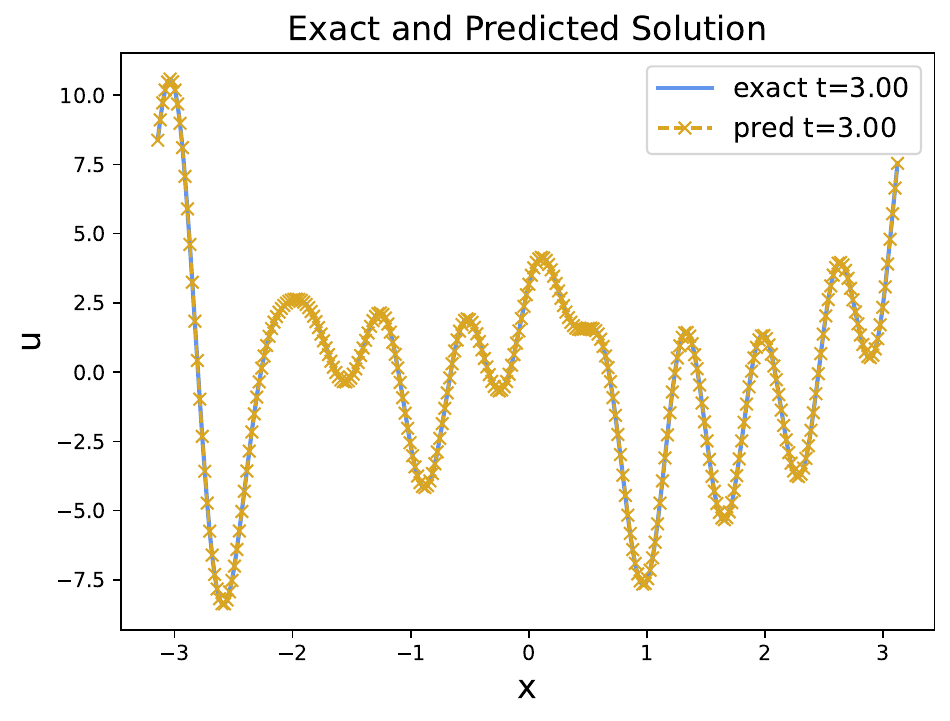}
	\end{minipage}
	\caption{Convection equation. Left: the loss function value at each training epoch. Middle: the exact and predicted solutions at $T=0.3$. Right: the exact and predicted solutions at $10T$.}
	\label{fig:con_snapshot}
\end{figure}
\begin{figure}[H]	
	\centering
	\includegraphics[scale=0.35]{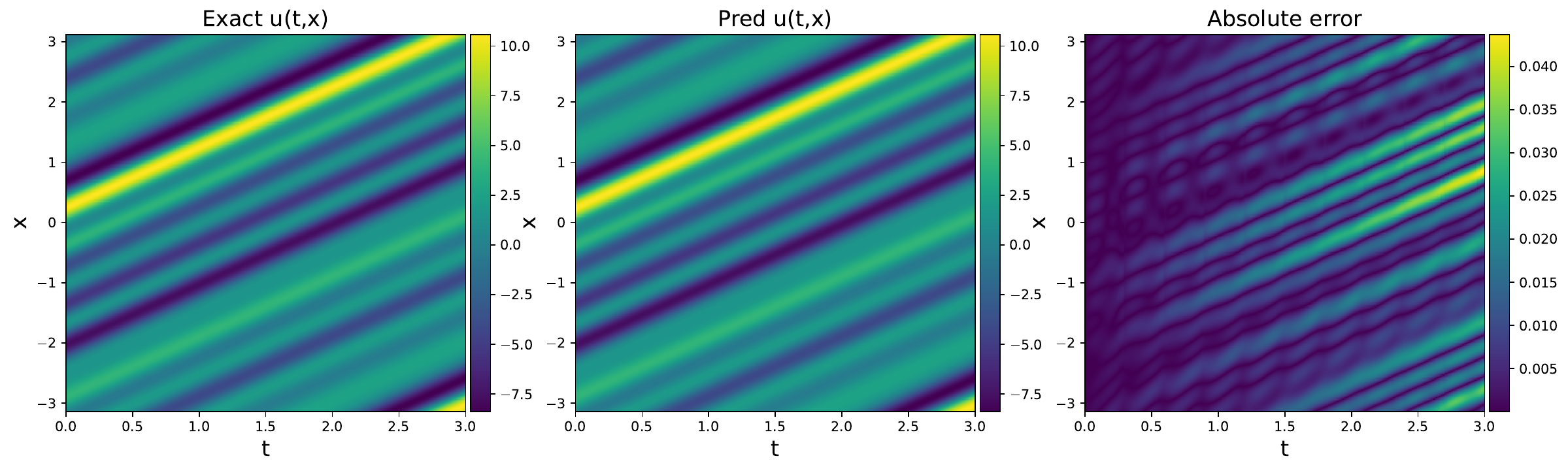}
	\caption{Convection equation. Left: exact solution. Middle: predicted solution. Right: absolute error. }
	\label{fig:con_cmap}
\end{figure}

For the one-dimensional convection equation with a constant convection coefficient, the solution at any time $t$ is simply a time-shifted version of the initial condition. Consequently, PI-DOSnet is capable of long-time prediction without the need for retraining. As discussed in Section~\ref{section 3}, unlike DOSnet, we employ a second-order Taylor expansion to approximate $e^{dt\cal{L}}$. To demonstrate the necessity of second-order expansion, we perform a simulation under the same settings as before, except that $e^{dt\cal{L}}$ is approximated by the first-order Taylor expansion ${\cal I}+dt {\cal L}_{\bm\theta}$. Since training PI-DOSnet with higher-order Taylor expansions becomes relatively expensive, we omit simulations with third-order or higher expansions.
The relative $L^2$ errors for long-time inference obtained with first- and second-order expansions are summarized in Tab.~\ref{tab: 1-2-order expansions}. At $t=100T=30$, the PI-DOSnet with the second-order expansion achieves a relative $L^2$ error on the order of $10^{-2}$, compared to $10^{-1}$ with the first-order expansion. These results clearly highlight the advantage of the second-order expansion, particularly for long-time inference.

\begin{table}[H]
\centering
    \begin{tabular}{c c c c c c c} 
        \hline
                               & $t=3$ & $t=6$ & $t=9$ & $t=15$ & $t=21$ & $t=30$\\
        \hline
        First-order expansion  & 1.640e-02 & 3.283e-02 &  4.927e-02  & 8.220e-02  & 1.151e-01 &  2.350e-01\\
        Second-order expansion & 5.318e-03 & 1.062e-02 & 1.592e-02  &  2.650e-02 & 3.704e-02  &  5.278e-02 \\
        \hline
    \end{tabular}
    \caption{Relative $L^2$ errors of PI-DOSnet with different-order expansions.}
    \label{tab: 1-2-order expansions}
\end{table}

\subsection{Diffusion-reaction equation}

Diffusion-reaction dynamics model the coupled processes of diffusion and chemical reactions of substances in space. The governing PDE is
\begin{equation}
\label{diff-react-equ}
	\left\{\ 
	\begin{aligned}
		&\frac{\partial u}{\partial t}=D\frac{\partial^2 u}{\partial x^2} + ku^2,\quad&(t,x)\in(0,T_{end}]\times(0,1),\\
		&u(0,x)=u_0(x),\quad&x\in(0,1),\\
		&u(t,0)=u(t,1)=0,\quad&t\in(0,T_{end}].\\
	\end{aligned}\right.
\end{equation}
Here, $D=0.001$ is the diffusion coefficient and $k=0.001$ is the reaction rate. We set $T_{end}=50$ and evaluate the performance of PI-DOSnet using different training time intervals $[0,T]$, considering three cases: $T=0.5$, $1$, and $2$. In all simulations, the number of blocks is fixed at four, with each block containing three convolution layers. The number of channels in each layer follows the sequence $1$–$8$–$8$–$1$, with a kernel size of 21. Across the three experiments, the total number of trainable parameters is 6,720. The loss function is calculated using 101 uniformly distributed spatial mesh points in the spatial interval $[0,1]$.

The dataset consists of 1,000 training functions and 100 test functions. The initial conditions are generated as Gaussian random fields with a learning scale of $0.2$. Reference solutions are obtained with a second-order implicit finite difference scheme \cite{wang2021longtimepidpo}, using a time step of 0.05 and a spatial grid spacing of 0.005. Boundary conditions are enforced through soft constraints, i.e., a mean squared error penalty applied at the boundary within the loss function.  
Results in Tab.~\ref{tab: DR} demonstrate that, even without retraining, the termination states are accurately captured across the three simulations with different training intervals. They further show that, with an appropriate network architecture, moderately increasing the time step $dt$ spanned by each block (e.g., from $dt=0.125$ to $dt=0.5$) does not lead to a significant increase in error. This behavior contrasts with traditional numerical methods, where numerical errors typically grow as the time step size increases.

\begin{table}[H]
	\centering
	\begin{tabular}{ccccccc}
		\hline 
		& $t=10$ & $t=20$ & $t=30$ & $t=40$ & $t=50$ & ARE \\
		\hline
		$T=0.5$ & 8.899e-04 & 1.124e-03 & 1.363e-03 &  1.581e-03 & 1.774e-03 & 1.037e-03\\
		$T=1$ & 6.121e-04 & 8.334e-04 & 1.075e-03 &  1.320e-03 & 1.558e-03 & 7.986e-04\\
		$T=2$ & 1.076e-03 & 1.766e-03 & 2.263e-03 &  2.600e-03 & 2.815e-03 & 1.477e-03\\
		\hline			
	\end{tabular}
	\caption{Diffusion-reaction equation: the relative $L^2$ errors at different time instants and on the entire space-time solution domain. Here "ARE" denotes the accumulative relative $L^2$ error.}
        \label{tab: DR}
\end{table}
\begin{figure}[H]
	\centering
	\includegraphics[scale=0.35]{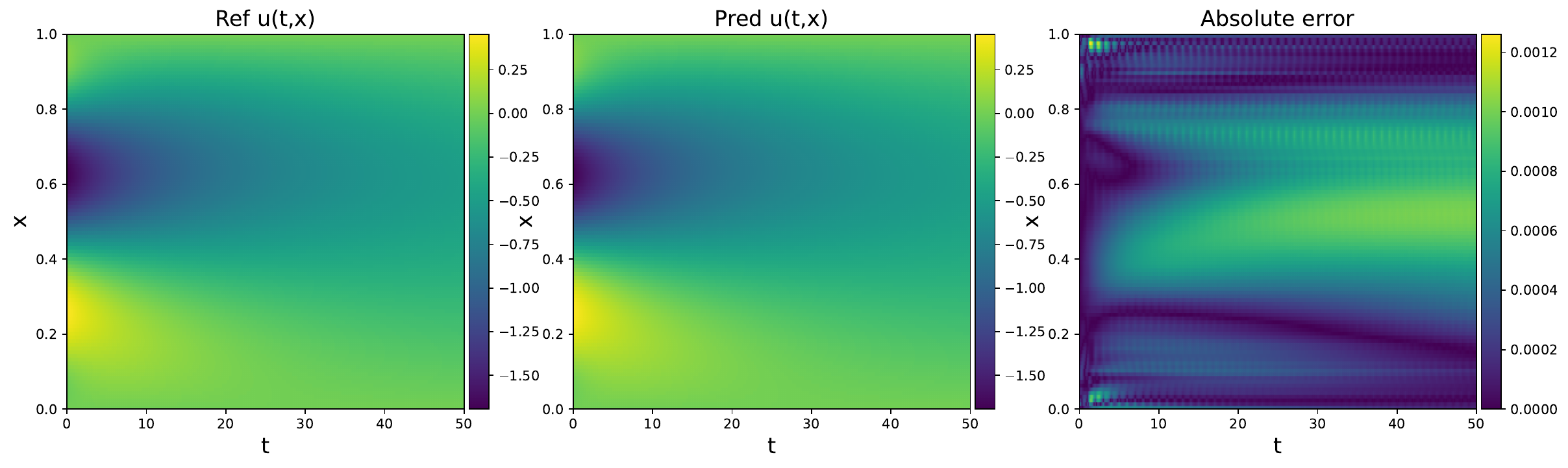}
	\caption{Diffusion-reaction equation. The training time interval is [0,1]. Left: reference solution. Middle: predicted solution. Right: absolute error. }
\end{figure}

To further investigate the effect of the number of blocks in PI-DOSnet, we conducted simulations with block counts of 1, 2, 4, and 8, while keeping all other settings unchanged. Since the total number of trainable parameters grows linearly with the number of blocks, the corresponding training times for these four cases are reported in Tab.~\ref{tab: DR_different_block_numbers}, showing a linear increase with block count. This observation suggests that the number of blocks should be kept small to reduce training cost. However, when the number of blocks is relatively small, PI-DOSnet may suffer from reduced accuracy in long-time inference. As shown in Tab.~\ref{tab: DR_different_block_numbers}, the relative $L^2$ error at $t=50$ for PI-DOSnet with only one block exceeds 1, supporting this conclusion. We also examined the effect of block count on prediction time. As reported in Tab.~\ref{tab: DR_different_block_numbers}, prediction time increases sub-linearly with the number of blocks. Overall, these simulations indicate that PI-DOSnet achieves the best balance of accuracy and efficiency when using 2 blocks.

	\begin{table}[H]
		\centering
		\begin{tabular}{c c c c c} 
			\hline 
			Block number  & 1  & 2 & 4 & 8 \\
			\hline 
			Training time &  526.43s & 878.88s & 1552.47s    & 3021.16s\\
			\hline 
			Prediction time (100 samples) & 0.11s & 0.15s & 0.22s & 0.37s\\ 
			Prediction time (500 samples) & 0.64s & 0.70s & 0.75s & 0.92s\\ 
			Prediction time (2,000 samples) & 2.42s & 2.72s & 2.91s & 3.54s\\ 
			\hline 
			$L^2$ error at $t=1$ & 1.23e-03 & 3.86e-04 & 3.13e-04 & 2.55e-04 \\
			$L^2$ error at $t=50$ & 9.56e-03 &  2.27e-03 & 1.55e-03 & 2.60e-03\\
			\hline 
		\end{tabular}
		\caption{Diffusion-reaction equation: training and prediction times, along with the relative $L^2$ errors, for PI-DOSnet with different numbers of blocks.}
        \label{tab: DR_different_block_numbers}
	\end{table}

\subsection{Allen--Cahn equation} \label{subsection:AC}

The Allen--Cahn (AC) equation is a widely used model to describe phase transitions in physical systems, phase separation, pattern formation \cite{allen1979ac,evans1992phasetrans,bray1994phaseordering}. We consider the following AC equation: 
\begin{equation}\label{eq: ac}
	\left\{\ 
	\begin{aligned}
		&\frac{\partial u}{\partial t}-0.0004 \Delta u+\left(u^3-u\right)=0, \quad t>0, \quad \bm x \in {(-1,1)}^d , \quad d=1{\text{ or }}2, \\
		&u(0,\bm x)=u_0(\bm x), \quad \bm x \in {(-1,1)}^d, \\
	\end{aligned}\right.
\end{equation}
with periodic boundary conditions. The initial data are generated using a Fourier series with random coefficients sampled from the standard Gaussian distribution $\mathcal{N}(0,1)$:
\begin{equation}
	\begin{aligned}
		u_0(x) =& \Big(a_0 + \sum_{m=1}^7 \big(a_m\cos(m\pi x) + b_m\sin(m\pi x)\big)\Big)\times 0.1,\quad d=1,\\
		u_0(\bm x) =&  { \Big(\sum_{m=0}^7 \sum_{n=0}^7 \big(c_{mn}\cos(m\pi x_1+n\pi x_2) + d_{mn}\cos(m\pi x_1-n\pi x_2)} \\
        & {+ e_{mn}\sin(m\pi x_1+n\pi x_2) + f_{mn}\sin(m\pi x_1-n\pi x_2)\big)\Big)\times 0.02,\quad d=2.}
	\end{aligned}
\end{equation}
Reference solutions are obtained using the Chebfun package \cite{driscoll2014chebfun}. 

For the one-dimensional case ($d=1$), we set $T=1$ and $T_{end}=10$. We sample $200\times 20$ collocation points uniformly in the spatial-temporal domain $[-1, 1]\times (0,T]$. The PI-DOSnet consists of 4 blocks, each comprising convolution layers with channel dimensions 1-4-1. The kernel size is set to 15. The network is trained for 8,000 epochs using Algorithm \ref{algorithm1} with 1,000 initial functions.
After training, we employ Algorithm \ref{algorithm2} to predict the solution of the AC equation (\ref{eq: ac}) for $t\in [T,\,10T]$. Since the solution of the AC equation (\ref{eq: ac}) evolves rapidly, the MSE defined by (\ref{eq: res_tstar}) increases quickly, as shown in Fig.~\ref{sub: b}. Due to the setting of predefined parameters, the inference procedure terminates at $t=3$. To address this, we augment the initial function set $\mathfrak{I}$ by adding 100 solutions at $t=2$, each associated with $200\times20$ collocation points, and retrain the PI-DOSnet using Algorithm~\ref{algorithm1}. After retraining, the inference procedure continues until $t=6$. Repeating the process once more, we obtain a network capable of predicting solutions with acceptable accuracy for the AC equation (\ref{eq: ac}) over the interval $[0,\,10T]$ for the given initial conditions. 

The MSEs of PI-DOSnet after successive retraining steps are displayed in Fig.~\ref{sub: b}, showing a reduction to approximately $4\times10^{-4}$ after two retraining iterations. The corresponding average relative $L^2$ errors are shown in Fig.~\ref{sub: a}, from which we observe that retraining significantly improves the accuracy of long-time inference. For illustration, we randomly select one initial condition from the test set and present the correspondingly predicted solution over the time interval $[0,\,10T]$ in Fig. \ref{fig: ac1d}. The prediction demonstrates good agreement with the reference solutions computed using the Chebfun package, with an absolute error less than 0.04. It is well known that the AC equation satisfies an energy dissipation law \cite{bray1994phaseordering}. For $d=1$, the energy is defined as
\begin{equation}
	E=\frac{0.0004}{2}\int_{-1}^1 {u_x}^2\mathrm{d}x+\int_{-1}^1 \frac{(u^2-1)^2}{4}\mathrm{d}x.
\end{equation}
The energy evolution obtained using the PI-DOSnet is displayed in Fig. \ref{sub: c}-\ref{sub: d}, exhibiting a nearly monotonic decline. Although the energy dissipation law is not strictly enforced, the predicted energy remains closely aligned with the reference energy curve computed using Chebfun. Designing a neural network that strictly satisfies the energy dissipation law remains a challenging task, which we leave for future work.

\begin{figure}[hptb] 
	\begin{subfigure}[t]{0.24\textwidth}
		\centering
		\includegraphics[scale=0.3]{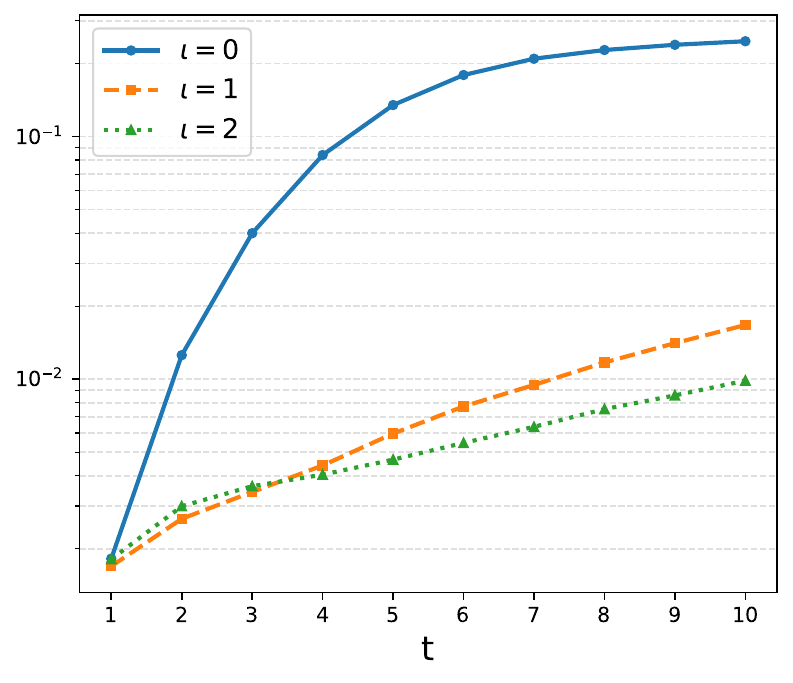}
        \caption{}
        \label{sub: a}
	\end{subfigure}
	\begin{subfigure}[t]{0.24\textwidth}
		\centering
		\includegraphics[scale=0.3]{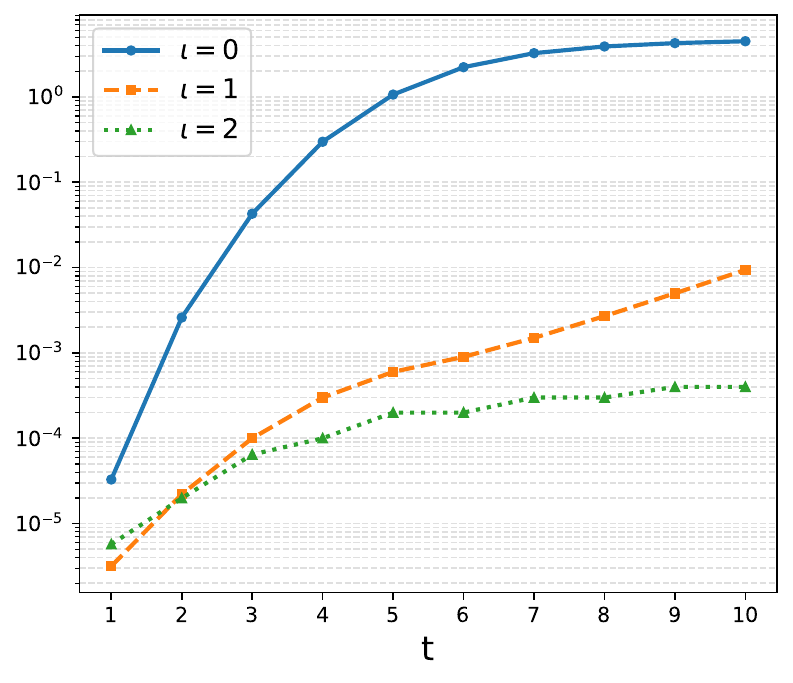}
        \caption{}
        \label{sub: b}
	\end{subfigure}
	\begin{subfigure}[t]{0.24\textwidth}
		\centering
		\includegraphics[scale=0.3]{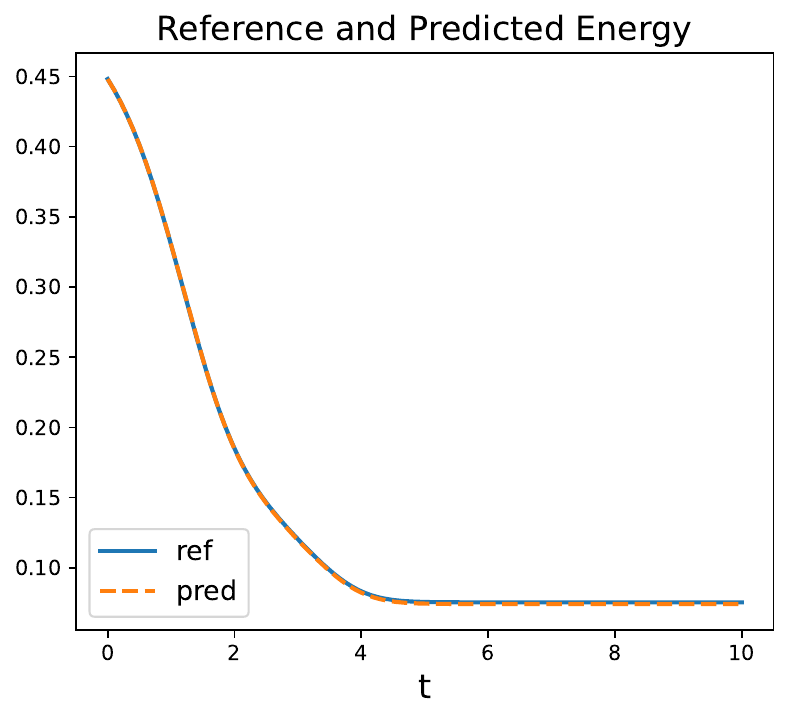} 
        \caption{}
        \label{sub: c}
    \end{subfigure}
	\begin{subfigure}[t]{0.24\textwidth}
		\centering
		\includegraphics[scale=0.3]{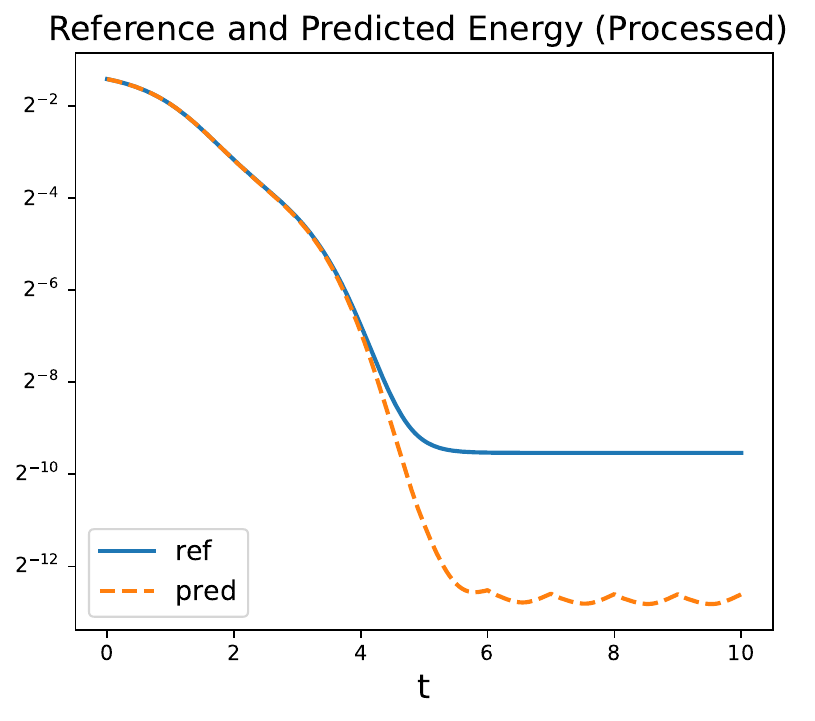} 
        \caption{}
        \label{sub: d}
	\end{subfigure}
	\caption{AC equation (1D). (a): The relative $L^2$ error at different retraining steps ($\iota =0$ stands for the first training). (b): MSE at different retraining steps. (c): Evolution of reference and predicted energy.  {(d): Evolution of reference and predicted relative energy, obtained by subtracting a positive constant.}}
	\label{fig:ac_retrain}
\end{figure}	

\begin{figure}[htbp]
	\centering
	\includegraphics[scale=0.28]{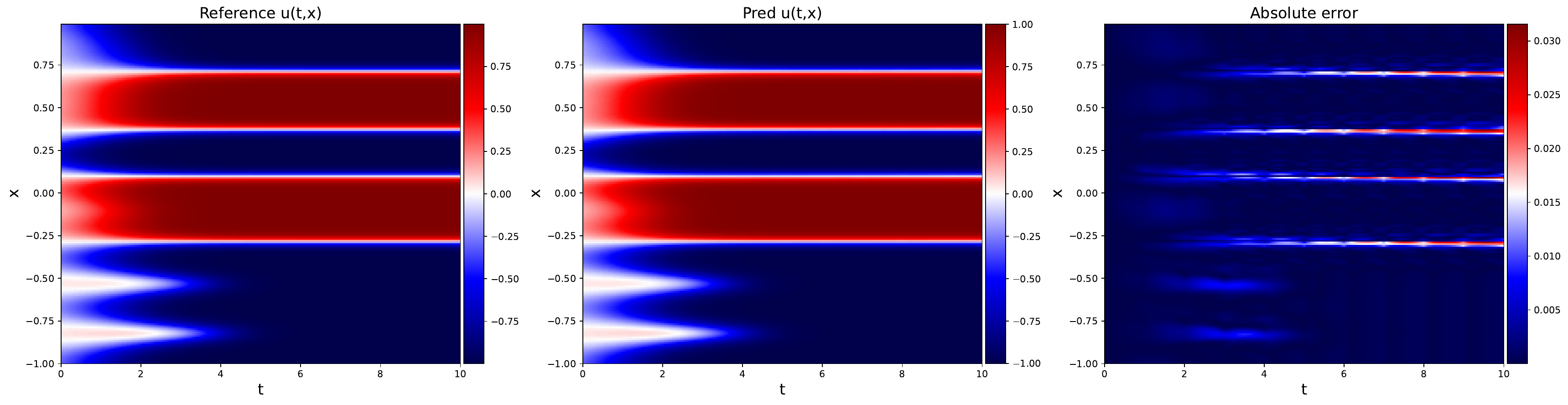}
	\caption{ {AC equation (1D, $T=1$, $T_{end}=10$). Left: reference solution. Middle: predicted solution. Right: absolute error.} }
	\label{fig: ac1d}
\end{figure}

To further assess the stability of PI-DOSnet, we conduct simulations with increased spatial mesh resolution while keeping the number of blocks fixed at 4. As reported in Tab.~\ref{tab: larger_dt}, PI-DOSnet successfully performs long-time inference up to $t=10$ even as the number of spatial mesh points increases from 200 to 800. The corresponding relative $L^2$ errors at $t=10$ for all three simulations remain on the order of $10^{-2}$. For comparison, we employ the traditional second- and fourth-order central difference schemes to approximate the linear operator $\Delta$, which correspond to convolution layers with channel dimension 1-1 and kernel sizes of 3 and 5, respectively. Although the PI-DOSnet is inexpensive to train with these traditional schemes, their predictions of the AC equation (\ref{eq: ac}) are restricted by the CFL condition. Consequently, as the number of spatial mesh points increases, the number of blocks must also be increased. This observation is confirmed by the numerical results in Tab.~\ref{tab: larger_dt}, where the listed block numbers represent the minimum required to achieve relative $L^2$ errors no larger than those obtained with PI-DOSnet using a trained convolution layer.

\begin{table}[H]
\centering
    \begin{tabular}{c c c c c} 
        \hline
        ~  &  & $N_x=200$ & $N_x=400$ & $N_x=800$ \\
        \hline
        \multirow{3}{*}{{PI-DOSnet (with 4 blocks)}}  & $L^2$ error at $t=1$ & 1.81e-03 & 1.97e-03  & 1.11e-03  \\
        ~& $L^2$ error at $t=10$ & 9.86e-03 & 1.19e-02  & 9.57e-03\\
        ~& Prediction time & 0.07s & 0.07s & 0.07s\\
        \hline
        \multirow{4}{*}{{PI-DOSnet (with $2$-nd CD)}} & Number of blocks & 8  & 32 & 128 \\
        ~& $L^2$ error at $t=1$ & 3.37e-03 &  8.52e-04  & 2.13e-04  \\
        ~& $L^2$ error at $t=10$ & 1.63e-02 & 3.76e-03  & 9.32e-04\\
        ~& Prediction time & 0.08s & 0.26s & 1.00s\\
        \hline
        \multirow{4}{*}{{PI-DOSnet (with $4$-th CD)}} & Number of blocks & 11  & 42 & 170 \\
        ~& $L^2$ error at $t=1$ & 2.26e-03 & 5.95e-04  & 1.48e-04  \\
        ~& $L^2$ error at $t=10$ & 1.12e-02  &  5.50e-03 & 5.74e-03\\
        ~& Prediction time & 0.10s & 0.34s & 1.35s\\
        \hline
    \end{tabular}
    \caption{AC equation (1D, $T=1$, $T_{end}=10$). Comparison of PI-DOSnet with a trained convolution layer and with convolution layers constructed using traditional second- and fourth-order central difference schemes. Here prediction time is reported for 100 samples. }
    \label{tab: larger_dt}
\end{table}

To illustrate the strong stability of PI-DOSnet equipped with the learned convolution layer, we present in Fig.~\ref{fig: eig_value} the eigenvalue distribution of the learned linear operator $-{\cal L}_\theta$. As the number of spatial mesh points increases from 200 to 800, the spectral radius of $-{\cal L}_\theta$ grows only slowly, indicating that PI-DOSnet can employ a uniformly small number of blocks (specifically four blocks) to achieve stable long time inference in all three cases. For comparison, Fig.~\ref{fig: eig_value} also shows the eigenvalue distribution of the operator $-{\cal L}$ constructed using the second-order central difference scheme. In this case, the spectral radius of $-{\cal L}$ increases quadratically with the number of spatial mesh points, which in turn imposes a corresponding CFL-type restriction on the allowable time step size. These results highlight the stability advantage of PI-DOSnet over traditional time-stepping methods.

\begin{figure}[htbp] 
	\begin{minipage}[t]{0.5\textwidth}
		\centering
		\includegraphics[scale=0.34]{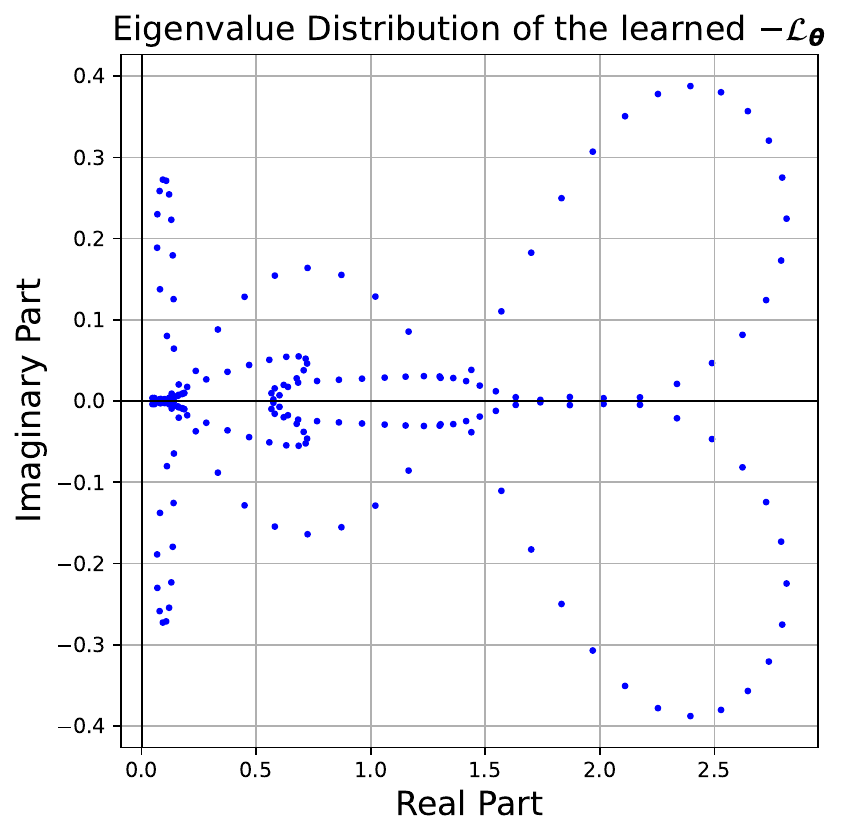}
	\end{minipage}
	\begin{minipage}[t]{0.5\textwidth}
		\centering
		\includegraphics[scale=0.34]{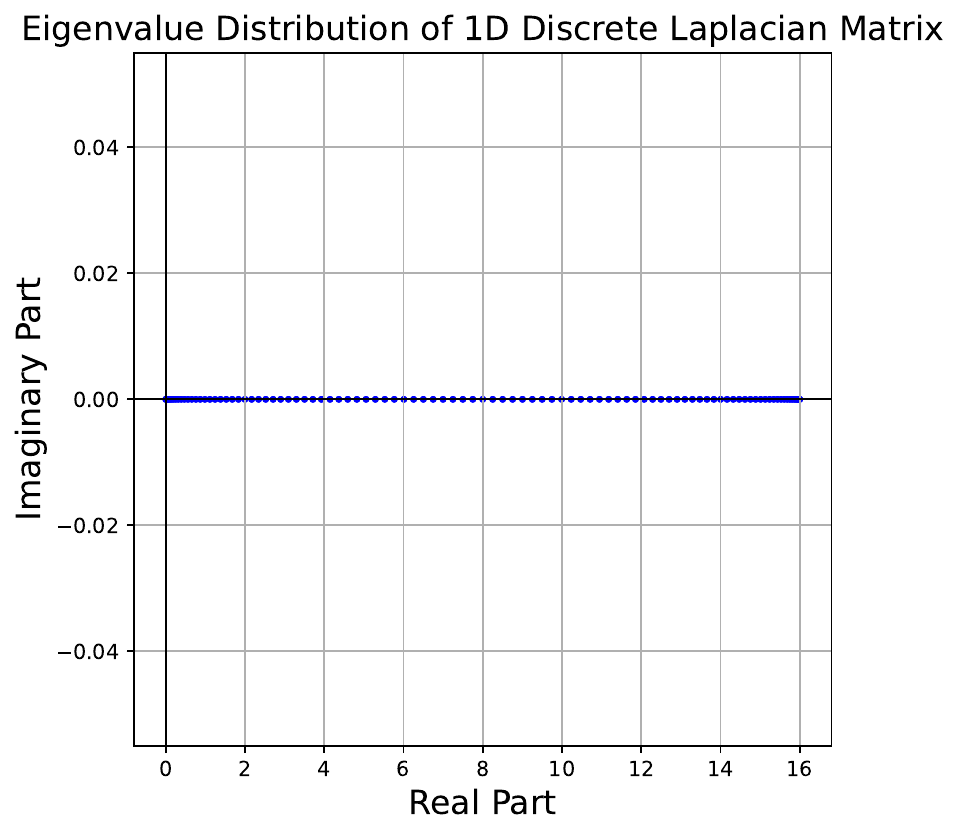}
	\end{minipage}
    \begin{minipage}[t]{0.5\textwidth}
		\centering
		\includegraphics[scale=0.34]{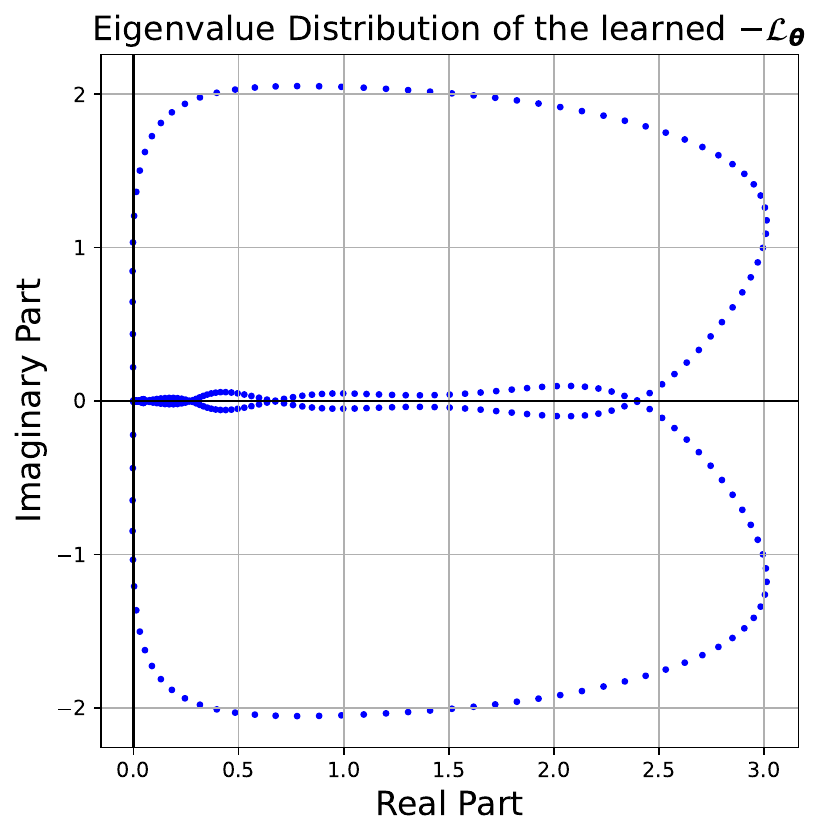}
	\end{minipage}
	\begin{minipage}[t]{0.5\textwidth}
		\centering
		\includegraphics[scale=0.34]{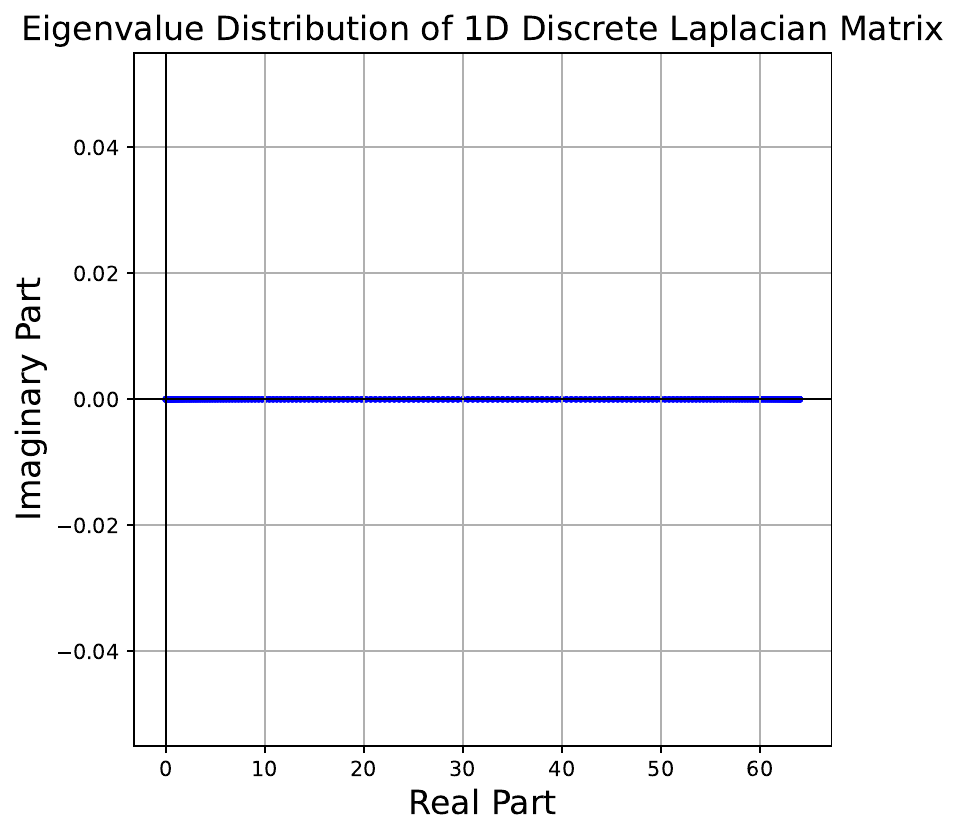}
	\end{minipage}
    \begin{minipage}[t]{0.5\textwidth}
		\centering
		\includegraphics[scale=0.34]{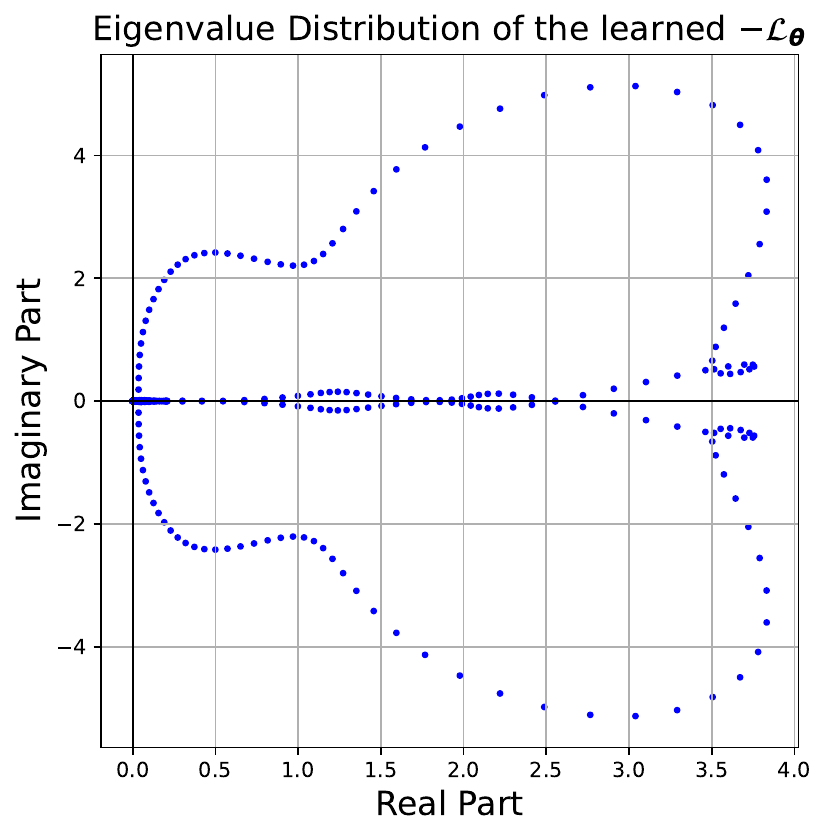}
	\end{minipage}
	\begin{minipage}[t]{0.5\textwidth}
		\centering
		\includegraphics[scale=0.34]{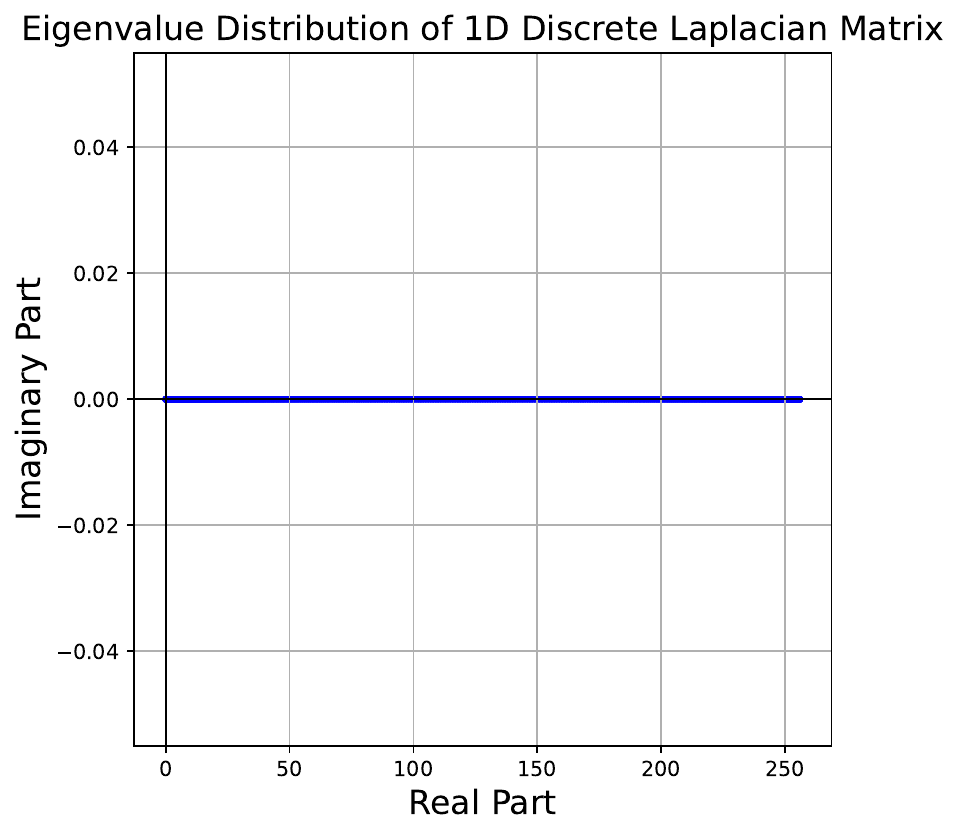}
	\end{minipage}
	\caption{AC equation. Left: eigenvalues of $-{\cal L}_\theta$; Right: eigenvalues of $-{\cal L}$ constructed using the second-order central difference scheme. Top: $N_x=200$; middle: $N_x=400$; and bottom: $N_x=800$. }
	\label{fig: eig_value}
\end{figure}

For $d=2$, we also set $T=1$ and $T_{end}=10$. For each initial function, we uniformly sample 20 time instances and use a $200\times200$ spatial grid over the spatial-temporal domain. The 4-block PI-DOSnet employs 4 channels in each convolution layer with a kernel size of $15\times 15$. In the first training stage, we supplied 400 initial functions. Similar to the one dimensional case, PI-DOSnet inference terminates at $t=3$. To initiate retraining, we augment the training set by randomly selecting 40 solutions from the preliminary predictions at time $t=2$. A second augmentation step is performed by adding predicted solutions at $t=5$. For a representative initial condition, Fig.~\ref{fig: ac2d} shows the solutions at $t=1$ and $t=10$ before and after retraining. The comparison demonstrates that retraining significantly reduces errors, particularly for long-time inference.

\begin{figure}[H]
	\centering
	\begin{subfigure}[t]{0.9\textwidth}  
		\centering
		\includegraphics[scale=0.24]{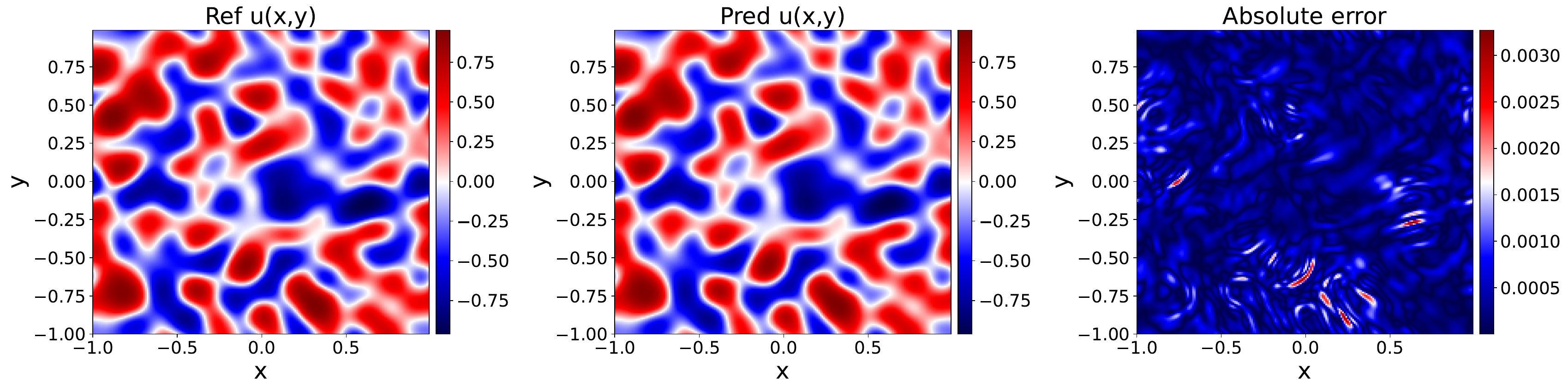} \\
		\includegraphics[scale=0.24]{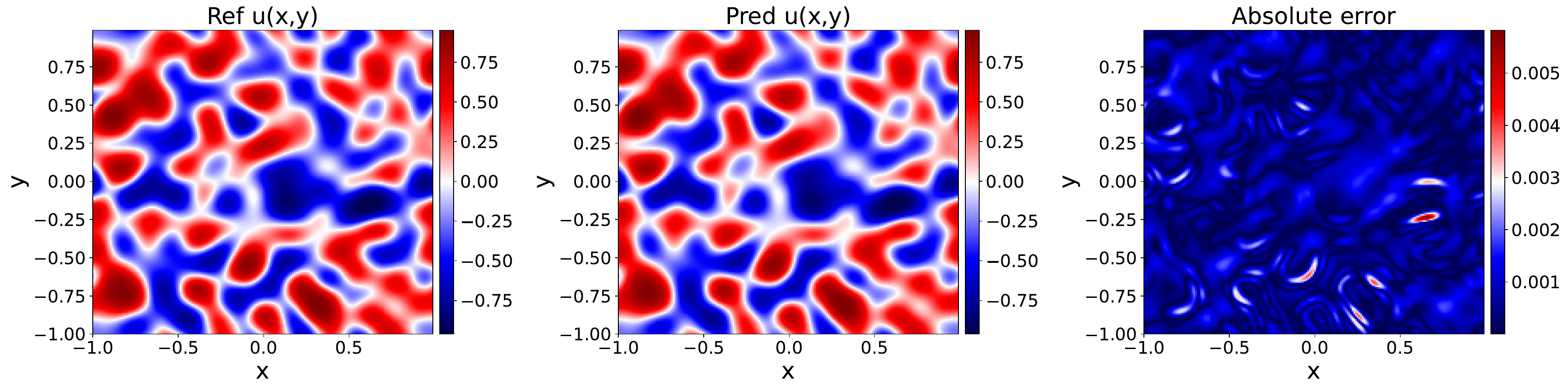}
		\caption{Comparison between the solutions at $t=1$ before and after retraining. Top row: results before retraining ($\iota=0$). Bottom row: results after twice retraining  stages ($\iota=2$). }
		\label{fig: ac2d_T}
	\end{subfigure}
	\begin{subfigure}[t]{0.9\textwidth}  
		\centering
		\includegraphics[scale=0.24]{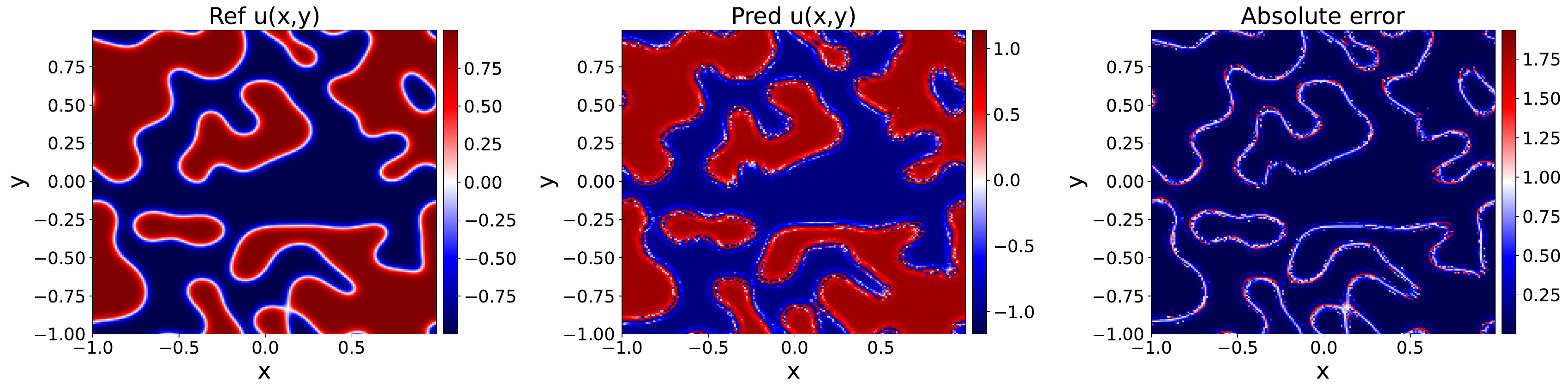} \\
		\includegraphics[scale=0.24]{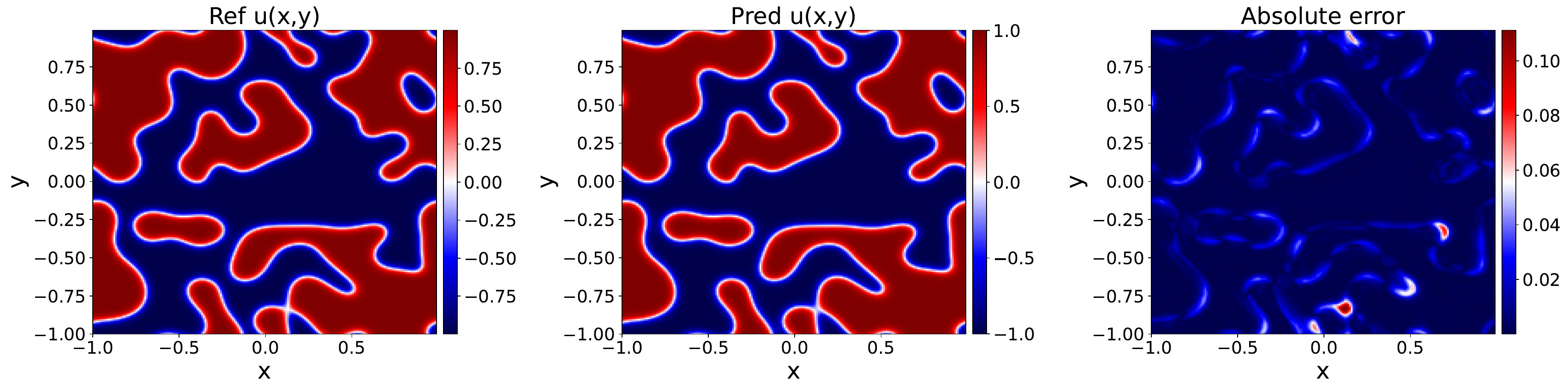}
		\caption{Comparison between the solutions at $t=10$ before and after retraining. Top row: results before retraining ($\iota=0$). Bottom row: results after twice retraining  stages ($\iota=2$). }
		\label{fig: ac2d_endT}
	\end{subfigure}
    \caption{{Results of two dimensional AC equation. From left to right: reference solution, predicted solution, and absolute error.}}
    \label{fig: ac2d}
\end{figure}

\subsection{Gross-Pitaevskii equation}

The final example we considered is the Gross-Pitaevskii equation (GPE), shown below with the reduced Planck constant set to $\hbar=1$:
\begin{equation}\label{eq: GP}
	\mathrm{i}\partial_t \psi = \left[-\frac{\partial_{xx}}{2m} + V(x) + g(x)\vert\psi\vert^2\right]\psi.
\end{equation}
This nonlinear GPE describes the dynamics of a dilute Bose–Einstein condensate at zero temperature. The three terms on the right-hand side correspond to the kinetic energy, the longitudinal potential along the 
$x$-direction, and the nonlinear particle interaction, respectively.

The nonlinear equation (\ref{eq: GP}) might admit solitary-wave solutions that satisfy $F=1$, where $F$ is defined by:
\begin{equation}
    F=\frac{1}{N^2}{\Bigg\vert\int\psi^{*}(T_{end},x)\psi(0,x)\mathrm{d}x\Bigg\vert}^2,
\end{equation}
with $N=\int |\psi(0,x)|^2 \mathrm{d}x$ denoting the total particle number and $\psi^*(T_{end},x)$ the complex conjugation of $\psi(T_{end},x)$. To identify solitary solutions, one solves the following optimization problem proposed in \cite{zhang2023soliton}:
\begin{equation}
	\begin{aligned}
	&\min\limits_{\psi(0,x)}\ |F-1|\\
    &\textnormal{s.t. }\ E_{\psi(0,x)}\leq E_{\max},\\
	\end{aligned}
\end{equation}
where $E_{\psi(0,x)}$ denotes the energy of the initial state $\psi(0,x)$ and $E_{\max}$ is a prescribed energy bound. This optimization requires a rapid calculation of $\psi(T_{end},x)$, which can be efficiently achieved using a trained PI-DOSnet. We use PI-DOSnet to learn the map $G:\psi(0,x)\rightarrow\psi(T,x)$ and then roll it forward to $T_{end}$. Following \cite{scherer2017compphys}, the initial conditions $\psi(0,x)$ are randomly generated from a truncated harmonic basis: 
\begin{equation}
	\begin{aligned}
		&\psi(x,0)=\sum_{n=1}^{n_c}c_n\xi_n(x),\\
		&\xi_n(x)=\frac{1}{2^n n!}{\Big(\frac{m\omega}{\pi} \Big)}^{\frac{1}{4}}H_n(\sqrt{m\omega}x)e^{-\frac{m\omega x^2}{2}},\\
		&\textnormal{s.t. }\sum_{n=1}^{n_c} {\vert c_n\vert}^2=N,
	\end{aligned}
\end{equation}
where $H_n(x)$ is the $n$-th Hermite polynomial and $\{\xi_n(x)\}$ forms the simple harmonic oscillator basis. This construction ensures that the normalization condition $\int {\vert \psi (x,t)\vert} ^2 \mathrm{d}x=N$ is satisfied, where $N$ denotes the number of atoms.

We consider the harmonic potential $V(x)=m\omega^2 x^2/2$ and the nonlinear particle interaction coefficient $g(x)=20\omega/(Nl)$, where $l=(m\omega)^{-1/2}$ denotes the harmonic length. To simplify the GPE \eqref{eq: GP}, we introduce the following dimensionless variables:
\begin{equation}
    \begin{aligned}
    \tilde{t}=\omega t,\qquad \tilde{x}=x/l,\qquad \tilde{\psi} (\tilde{t},\tilde{x})=\frac{\psi(t,x)}{\sqrt{N/l}}.
    \end{aligned}
\end{equation}
Under this transformation, the GPE becomes the dimensionless form
\begin{equation}
    \mathrm{i}\frac{\partial \tilde{\psi}}{\partial\tilde{t}}=(-\frac{1}{2}\partial_{\tilde{x}\tilde{x}}+\frac{1}{2}\tilde{x}^2+20|\tilde{\psi}|^2)\tilde{\psi}.
\end{equation}
The normalized wavefunction $\tilde{\psi}$ satisfies $$\int |\tilde{\psi}|^2 \mathrm{d}x=\sum_{n=1}^{n_c} {\vert \tilde{c}_n\vert}^2=1.$$ Correspondingly, the harmonic basis functions simplify to $$\xi_n(\tilde{x})=\frac 1 {(2^n n!)}{\pi}^{-\frac{1}{4}}H_n(\tilde{x})e^{-\frac{\tilde{x}^2}{2}}.$$

Since the potential term $V(x)$ is not translation-invariant, it cannot be efficiently represented using a convolution layer. Therefore, we incorporate it into the nonlinear operator, even though $V(x)$ is itself linear. In this setting, the nonlinear operator becomes ${\cal N}+ V(x)$. Because the solution decays rapidly as $|x|$ goes to infinite, we truncate the computational domain to the bounded interval $(-15,15)$. To avoid complex calculations in the convolution layer, we decompose the linear operator as $\mathcal{L}=\mathcal{L}_r+\mathrm{i}\mathcal{L}_i$, and use two separate convolution operators to approximate the real part $\mathcal{L}_r$ and the imaginary part $\mathcal{L}_i$. 
For a given input $u$, the output of $\big(\mathcal{I}+dt\mathcal{L}+\frac{dt^2}{2}\mathcal{L}^2\big)u$ is then computed by evaluating the real and imaginary components separately. 

\begin{figure}[H] 
	\begin{minipage}[t]{0.48\textwidth}
		\centering
		\includegraphics[scale=0.45]{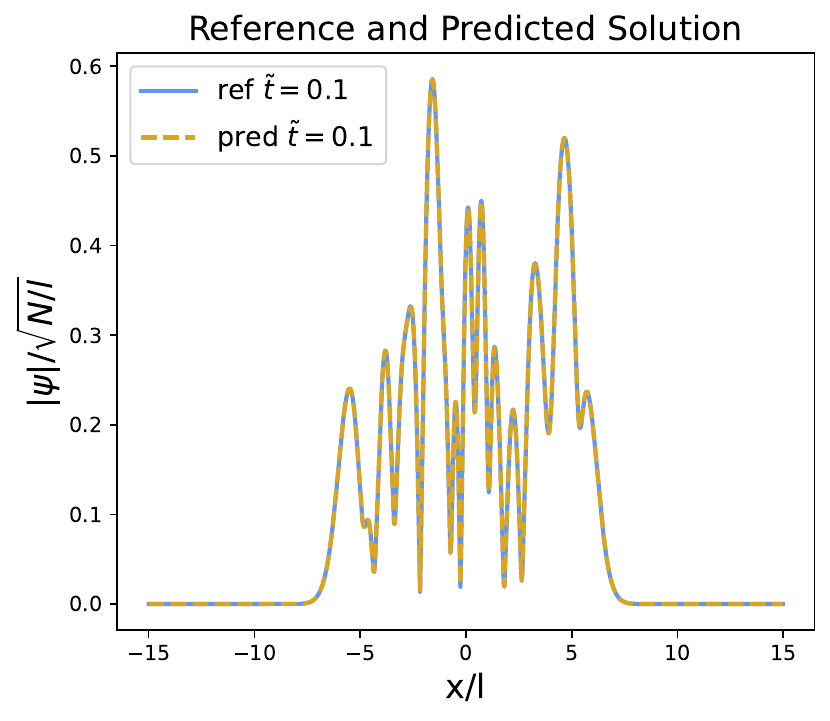}
	\end{minipage}
	\begin{minipage}[t]{0.48\textwidth}
		\centering
		\includegraphics[scale=0.45]{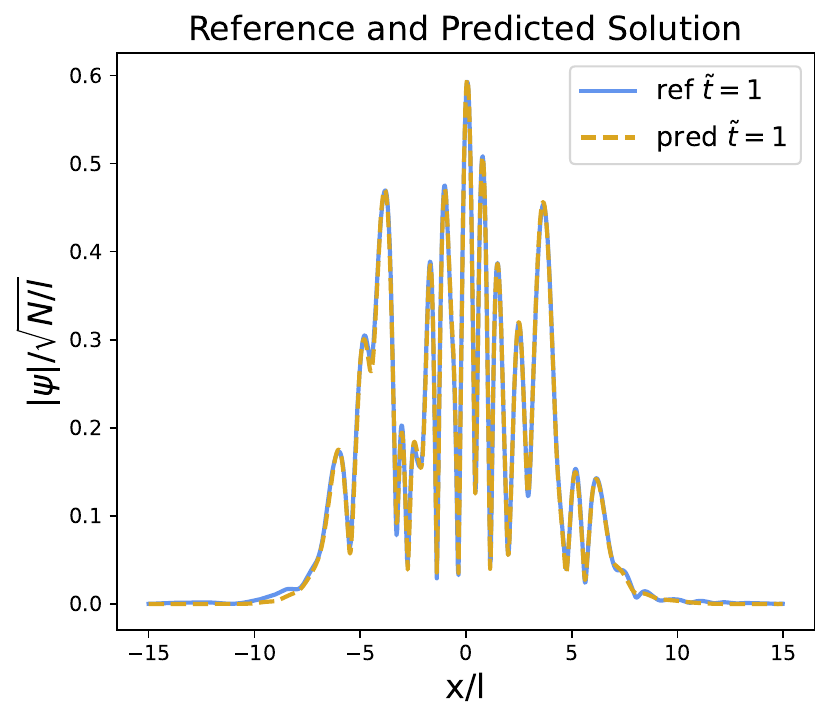}
	\end{minipage}
	\caption{{GP equation. Left: the exact and predicted solutions at $\tilde{T}=0.1$. Right: the exact and predicted solutions at $\tilde{T}_{end}=1$.}}
	\label{fig:GP}
\end{figure}

The number of spatial-temporal collocation points is $1,024\times10$. The training time interval is $[0, 0.1]$, and the end time {$\tilde{T}_{end}=1$} is selected based on the solitary-wave identification procedure. The PI-DOSnet includes 8 blocks. In the convolution layers, we set $depth=2$ and $kernel\ size=21$. 
The network is trained for 15,000 epochs using Algorithm \ref{algorithm1} with 1,000 initial functions.
After training, PI-DOSnet provides highly accurate solutions at {$t=\tilde{T}_{end}$}. As shown in Fig. \ref{fig:GP}, for a representative initial condition, the predicted solution closely matches the reference solution at $\tilde{T}_{end}=1$. Here, the reference solutions are obtained using a time-splitting spectral method \cite{bao2012bec}. Across 100 randomly generated test initial conditions, the average relative $L^2$ errors are 5.325e-03 at $\tilde{T}=0.1$ and 4.087e-02 at $\tilde{T}_{end}=1$. These results highlight the effectiveness and accuracy of PI-DOSnet for the GP equation, whose solutions are complex-valued functions.

\section{Summary}\label{section 5}
We propose a physics-informed operator learning approach, termed PI-DOSnet, for efficiently solving evolution equations. By incorporating both the initial conditions and time as inputs, and by leveraging a second-order Taylor expansion to obtain an explicit time-dependent linear operator, PI-DOSnet enables direct computation of temporal derivatives and accurate approximation of spatial derivatives. This design allows the model to be trained entirely through physics-based constraints, eliminating the need for paired input–output data. Numerical experiments demonstrate that PI-DOSnet achieves high accuracy, stability, and computational efficiency. Overall, PI-DOSnet provides a robust and efficient framework for operator learning in evolution PDEs. Despite these encouraging results, PI-DOSnet still faces several challenges, particularly in handling more complex time-dependent PDEs on intricate geometric domains, high-dimensional problems, and PDEs exhibiting low-regularity solutions.

\section*{Acknowledgement}

\bibliography{Reference}

@Article{g2002splitting,
  Title                    = {Splitting methods},
  Author                   = {McLachlan, R I and Quispel, G R W},
  Journal                  = {Acta Numerica},
  Year                     = {2002},
  pages                    = {341-434},
  volume                   = {11},
}

@Article{lan2023dosnet,
  Title                    = {{DOSnet} as a non-black-box {PDE} solver: When deep learning meets operator splitting},
  Author                   = {Lan, Yuan and Li, Zhen and Sun, Jie and Xiang, Yang},
  Journal                  = {Journal of Computational Physics},
  Year                     = {2023},
  volume                   = {491},
  pages                    = {112343},
}

@InProceedings{glorot2011relu,
  Title                    = {Deep sparse rectifier neural networks},
  Author                   = {Glorot, X and Bordes, A and Bengio, Y},
  Booktitle                = {Proceedings of the fourteenth international conference on artificial intelligence and statistics},
  Organization             = {JMLR Workshop and Conference Proceedings},
  year                     = {2011},
  pages                    = {315-323},
}

@Article{kingma2014adam,
  Title                    = {Adam: A method for stochastic optimization},
  Author                   = {Kingma, D P and Ba, J},
  Journal                  = {arXiv preprint arXiv:1412.6980},
  Year                     = {2014},
}

@Article{zhang2023soliton,
  Title                    = {Complex-valued neural-operator-assisted soliton identification},
  Author                   = {Zhang, M. and Meng, Q. and Zhang, D. and Wang, Y. and Wang, G. and Ma, Z. and Chen, L. and Liu, T. Y. },
  Journal                  = {Physical Review E},
  Year                     = {2023},
  volume                   = {108(2)},
  pages                    = {025305},
}

@Article{driscoll2014chebfun,
  Title                    = {Chebfun guide},
  Author                   = {Driscoll, T A and Hale, N and Trefethen, L N.},
  Year                     = {2014},
  Journal                  = {},
}

@Article{pathria1990SSFM,
  Title                    = {Pseudo-spectral solution of nonlinear Sch$\ddot{o}$dinger equations},
  Author                   = {Pathria,D and Morris,J L},
  Journal                  = {Journal of Computational Physics},
  Year                     = {1990},
  volume                   = {87(1)},
  pages                    = {108-125},
}

@Article{muslu2005SSFM,
  Title                    = {Higher-order split-step Fourier schemes for the generalized nonlinear Schr$\ddot{o}$dinger equation},
  Author                   = {Muslu, G M and Erbay, H A},
  Journal                  = {Mathematics and Computers in Simulation},
  Year                     = {2005},
  volume                   = {67(6)},
  pages                    = {581-595},
}

@Article{raissi2019pinn,
  Title                    = {Physics-informed neural networks: A deep learning framework for solving forward and inverse problems involving nonlinear partial differential equations},
  Author                   = {Raissi,M and Perdikaris, P and Karniadakis, G E},
  Journal                  = {Journal of Computational Physics},
  Year                     = {2019},
  volume                   = {378},
  pages                    = {686-707},
}

@Article{yu2018dr,
  Title                    = {The deep {Ritz} method: a deep learning-based numerical algorithm for solving variational problems},
  Author                   = {E,Weinan and Yu, Bing},
  Journal                  = {Communications in Mathematics and Statistics},
  volume                   = {6},
  number                   = {1},
  pages                    = {1--12},
  year                     = {2018},
  publisher                = {Springer},
}

@Article{liao2019dn,
  Title                    = {Deep {Nitsche} method: deep {Ritz} method with essential boundary conditions},
  Author                   = {Liao, Y and Ming, P.},
  Journal                  = {arXiv preprint arXiv:1912.01309},
  Year                     = {2019},
}

@Article{zang2020wan,
  Title                    = {Weak adversarial networks for high-dimensional partial differential equations},
  Author                   = {Zang, Y and Bao, G and Ye, X and Zhou, Haomin},
  Journal                  = {Journal of Computational Physics},
  Year                     = {2020},
  volume                   = {411},
  pages                    = {109409},
}

@Article{han2017dlhdp,
  Title                    = {Deep learning-based numerical methods for high-dimensional parabolic partial differential equations and backward stochastic differential equations},
  Author                   = {Han,J and Jentzen, A},
  Journal                  = {Communications in mathematics and statistics},
  Year                     = {2017},
  volume                   = {5(4)},
  pages                    = {349-380},
}

@Article{han2018hd,
  Title                    = {Solving high-dimensional partial differential equations using deep learning},
  Author                   = {Han,J and Jentzen, A and E, W},
  Journal                  = {Proceedings of the National Academy of Sciences},
  Year                     = {2018},
  volume                   = {115(34)},
  pages                    = {8505-8510},
}

@Article{wang2020mf,
  Title                    = {A mesh-free method for interface problems using the deep learning approach},
  Author                   = {Wang,Z and Zhang, Z},
  Journal                  = {Journal of Computational Physics},
  Year                     = {2020},
  volume                   = {400},
  pages                    = {108963},
}

@Article{chiu2022canpinn,
  Title                    = {{CAN-PINN}: A fast physics-informed neural network based on coupled-automatic-numerical differentiation method},
  Author                   = {Chiu, P H and Wong, J C and Ooi, C and Dao, M H and Ong, Y S},
  Journal                  = {Computer Methods in Applied Mechanics and Engineering},
  Year                     = {2022},
  volume                   = {395},
  pages                    = {114909},
}

@Article{wang2024multistagenn,
  Title                    = {Multi-stage neural networks: Function approximator of machine precision},
  Author                   = {Wang, Y and Lai, C Y},
  Journal                  = {Journal of Computational Physics},
  Year                     = {2024},
  volume                   = {504},
  pages                    = {112865},
}

@Article{beck2019hd_fullynonlinear,
  Title                    = {Machine learning approximation algorithms for high-dimensional fully nonlinear partial differential equations and second-order backward stochastic differential equations},
  Author                   = {Beck, C and E, W and Jentzen, A},
  Journal                  = {Journal of Nonlinear Science},
  Year                     = {2019},
  volume                   = {29(4)},
  pages                    = {1563-1619},
}

@Article{krishnapriyan2021failure_modes,
  Title                    = {Characterizing possible failure modes in physics-informed neural networks},
  Author                   = {Krishnapriyan, A S and Gholami, A and Zhe, S and Kirby, R M and Mahoney, M W},
  Journal                  = {Advances in neural information processing systems},
  Year                     = {2021},
  volume                   = {34},
  pages                    = {26548-26560},
}

@Article{mattey2022bcpinn,
  Title                    = {A novel sequential method to train physics informed neural networks for {Allen Cahn} and {Cahn Hilliard} equations},
  Author                   = {Mattey, R and Ghosh, S},
  Journal                  = {Computer Methods in Applied Mechanics and Engineering},
  Year                     = {2022},
  volume                   = {390},
  pages                    = {114474},
}

@Article{wight2020adaptivepinn,
  Title                    = {Solving {Allen-Cahn} and {Cahn-Hilliard} equations using the adaptive physics informed neural networks},
  Author                   = {Wight, C L and Zhao, J},
  Journal                  = {arXiv preprint arXiv:2007.04542},
  Year                     = {2020},
}

@Article{wang2024causality,
  Title                    = {Respecting causality for training physics-informed neural networks},
  Author                   = {Wang, S and Sankaran, S and Perdikaris, P},
  Journal                  = {Computer Methods in Applied Mechanics and Engineering},
  Year                     = {2024},
  volume                   = {421},
  pages                    = {116813},
}

@Article{du2021ednn,
  Title                    = {Evolutional deep neural network},
  Author                   = {Du, Yifan and Tamer A. Zaki},
  Journal                  = {Physical Review E},
  Year                     = {2021},
  volume                   = {104.4},
  pages                    = {045303},
}

@Article{gu2022dabg,
  Title                    = {Deep adaptive basis Galerkin method for high-dimensional evolution equations with oscillatory solutions},
  Author                   = {Gu, Yiqi and Michael K. Ng},
  Journal                  = {SIAM Journal on Scientific Computing},
  Year                     = {2022},
  volume                   = {44.5},
  pages                    = {A3130-A3157},
}

@Article{kovachki2023no,
  Title                    = {Neural operator: learning maps between function spaces with applications to {PDEs}},
  Author                   = {Kovachki, N and Li, Z and Liu, B and Azizzadenesheli, K and Bhattacharya, K and Stuart, A and Anandkumar, A},
  Journal                  = {Journal of Machine Learning Research},
  Year                     = {2023},
  volume                   = {24(89)},
  pages                    = {1-97},
}

@Article{li2020fno,
  Title                    = {Fourier neural operator for parametric partial differential equations},
  Author                   = {Li,Z and Kovachki, N and Azizzadenesheli, K and Liu, Burigede and Bhattacharya, Kaushik and Stuart, Andrew and Anandkumar, Anima},
  Journal                  = {arXiv preprint arXiv:2010.08895},
  Year                     = {2020},
}

@Article{lu2021deeponet,
  Title                    = {Learning nonlinear operators via {DeepONet} based on the universal approximation theorem of operators},
  Author                   = {Lu, L and Jin, P and Pang, G and Karniadakis, George Em},
  Journal                  = {Nature machine intelligence},
  Year                     = {2021},
  volume                   = {3(3)},
  pages                    = {218-229},
}

@Article{li2020gno,
  Title                    = {Neural operator: Graph kernel network for partial differential equations},
  Author                   = {Li, Z and Kovachki, N and Azizzadenesheli, K and Liu, Burigede and Bhattacharya,Kaushik and Stuart, Andrew and Anandkumar, Anima},
  Journal                  = {arXiv preprint arXiv:2003.03485},
  Year                     = {2020},
}

@Article{li2022transo,
  Title                    = {Transformer for partial differential equations' operator learning},
  Author                   = {Li, Z and Meidani, K and Farimani, A B},
  Journal                  = {arXiv preprint arXiv:2205.13671},
  Year                     = {2022},
}

@Article{chen2024pit,
  Title                    = {Positional knowledge is all you need: Position-induced transformer ({PiT}) for operator learning},
  Author                   = {Chen, J. and Wu, K.},
  Journal                  = {arXiv preprint arXiv:2405.09285},
  Year                     = {2024},
}

@InProceedings{hao2023gnot,
  Title                    = {Gnot: A general neural operator transformer for operator learning},
  Author                   = {Hao, Z and Wang, Z and Su, H and Ying, C and Dong, Y and Liu, S and Cheng, Z and Song, J and Zhu, J},
  Booktitle                = {International Conference on Machine Learning},
  Year                     = {2023},
  Pages                    = {12556-12569},
  Publisher                = {PMLR},
}

@Article{ye2024pdeformer,
  Title                    = {Pdeformer: Towards a foundation model for one-dimensional partial differential equations},
  Author                   = {Ye, Z. and Huang, X. and Chen, L. and Liu, H. and Wang, Z. and Dong, B.},
  Journal                  = {arXiv preprint arXiv:2402.12652},
  Year                     = {2024},
}

@Article{tripura2023wno,
  Title                    = {Wavelet neural operator for solving parametric partial differential equations in computational mechanics problems},
  Author                   = {Tripura, T and Chakraborty, S},
  Journal                  = {Computer Methods in Applied Mechanics and Engineering},
  Year                     = {2023},
  volume                   = {404},
  pages                    = {115783},
}

@Article{wang2021pideeponet,
  Title                    = {Learning the solution operator of parametric partial differential equations with physics-informed {DeepONets}},
  Author                   = {Wang, S and Wang, H and Perdikaris, P},
  Journal                  = {Science advances},
  Year                     = {2021},
  volume                   = {7(40)},
  pages                    = {eabi8605},
}

@Article{wang2021longtimepidpo,
  Title                    = {Long-time integration of parametric evolution equations with physics-informed {DeepONets}},
  Author                   = {Wang, S and Perdikaris, P},
  Journal                  = {Journal of Computational Physics},
  Year                     = {2023},
  volume                   = {475},
  pages                    = {111855},
}

@Article{li2024pino,
  Title                    = {Physics-informed neural operator for learning partial differential equations},
  Author                   = {Li, Z and Zheng, H and Kovachki, N and Jin, D and Chen, H and Liu, B and Azizzadenesheli, K and Anandkumar, A},
  Journal                  = {ACM/IMS Journal of Data Science},
  Year                     = {2024},
  volume                   = {1(3)},
  pages                    = {1-27},
}

@Article{gao2021phygeonet,
  Title                    = {{PhyGeoNet}: Physics-informed geometry-adaptive convolutional neural networks for solving parameterized steady-state PDEs on irregular domain},
  Author                   = {Gao,H and Sun, L and Wang, J X},
  Journal                  = {Journal of Computational Physics},
  Year                     = {2021},
  volume                   = {428},
  pages                    = {110079},
}

@Article{tripura2024piwno,
  Title                    = {Physics informed {WNO}},
  Author                   = {Navaneeth, N. and Tripura, T. and Chakraborty, S. },
  Journal                  = {Computer Methods in Applied Mechanics and Engineering},
  Year                     = {2024},
  volume                   = {418},
  pages                    = {116546},
}

@unpublished{zhong2024pidcon,
  author                   = {Zhong, W and Meidani, H},
  title                    = {Physics-informed mesh-independent deep compositional operator network},
  year                     = {2024},
  note                     = {Available at SSRN 4835481},
}

@Article{zhang2025pignn,
  Title                    = {Combining physics-informed graph neural network and finite difference for solving forward and inverse spatiotemporal {PDEs}},
  Author                   = {Zhang, H. and Jiang, L. and Chu, X. and Wen, Y. and Li, L. and Liu, J. and Xiao, Y. and Wang, L.},
  Journal                  = {Computer Physics Communications},
  Year                     = {2025},
  volume                   = {308},
  pages                    = {109462},
}

@Article{blanes2024splitting,
  Title                    = {Splitting methods for differential equations},
  Author                   = {Blanes, S and Casas, F and Murua, A},
  Journal                  = {Acta Numerica},
  Year                     = {2024},
  pages                    = {1-160},
}

@Article{trotter1959ltsplit,
  Title                    = {On the product of semi-groups of operators},
  Author                   = {Trotter, H F},
  Journal                  = {Proceedings of the American Mathematical Society},
  Year                     = {1959},
  volume                   = {10(4)},
  pages                    = {545-551},
}

@Article{strang1968ssplit,
  Title                    = {On the construction and comparison of difference schemes},
  Author                   = {Strang, G},
  Journal                  = {SIAM journal on numerical analysis},
  Year                     = {1968},
  volume                   = {5(3)},
  pages                    = {506-517},
}

@Article{kim1985ns,
  Title                    = {Application of a fractional-step method to incompressible {Navier-Stokes} equations},
  Author                   = {Kim, J and Moin, P},
  Journal                  = {Journal of computational physics},
  Year                     = {1985},
  volume                   = {59(2)},
  pages                    = {308-323},
}

@Article{tuckerman1992respa,
  Title                    = {Reversible multiple time scale molecular dynamics},
  Author                   = {Tuckerman, M and Berne, B J and Martyna, G J},
  Journal                  = {The Journal of chemical physics},
  Year                     = {1992},
  volume                   = {97(3)},
  pages                    = {1990-2001},
}

@Article{paszke2017ad,
  Title                    = {Automatic differentiation in pytorch},
  Author                   = {Paszke, A and Gross, S and Chintala, S and Chanan, G and Yang, E and DeVito, Z and Lin, Z and Desmaison, A and Antiga, L and Lerer, A},
  Year                     = {2017},
}

@InProceedings{cohen2016equiconv,
  Title                    = {Group equivariant convolutional networks},
  Author                   = {Cohen, T and Welling, M},
  Booktitle                = {International conference on machine learning},
  Publisher                = {PMLR},
  Year                     = {2016},
  pages                    = {2990-2999},
}

@Article{ju2015etd1,
  Title                    = {Fast explicit integration factor methods for semilinear parabolic equations},
  Author                   = {Ju, L and Zhang, J and Zhu, L and Du, Q},
  Journal                  = {Journal of Scientific Computing},
  Year                     = {2015},
  volume                   = {62(2)},
  pages                    = {431-455},
}

@Article{zhu2016etd2,
  Title                    = {Fast high-order compact exponential time differencing Runge–Kutta methods for second-order semilinear parabolic equations},
  Author                   = {Zhu, L and Ju, L and Zhao, W},
  Journal                  = {Journal of Scientific Computing},
  Year                     = {2016},
  volume                   = {67(3)},
  pages                    = {1043-1065},
}

@Article{allen1979ac,
  Title                    = {A microscopic theory for antiphase boundary motion and its application to antiphase domain coarsening},
  Author                   = {Allen,S M and Cahn, J W },
  Journal                  = {Acta metallurgica},
  Year                     = {1979},
  volume                   = {27(6)},
  pages                    = {1085-1095},
}

@Article{evans1992phasetrans,
  Title                    = {Phase transitions and generalized motion by mean curvature},
  Author                   = {Evans, L C and Soner, H M and Souganidis, P E},
  Journal                  = {Communications on Pure and Applied Mathematics},
  Year                     = {1992},
  volume                   = {45(9)},
  pages                    = {1097-1123},
}

@Article{bray1994phaseordering,
  Title                    = {Theory of phase-ordering kinetics},
  Author                   = {Bray, A J},
  Journal                  = {Advances in Physics},
  Year                     = {1994},
  volume                   = {43(3)},
  pages                    = {357-459},
}

@Book{scherer2017compphys,
  Title                    = {Computational physics: simulation of classical and quantum systems},
  Author                   = {Scherer, P O J},
  Publisher                = {Springer},
  Year                     = {2017},
}

@Article{bao2012bec,
  Title                    = {Mathematical theory and numerical methods for {Bose-Einstein} condensation},
  Author                   = {Bao, W and Cai, Y},
  Journal                  = {arXiv preprint arXiv:1212.5341},
  Year                     = {2012},
}

\end{document}